\newcommand{\morphism}{\rightsquigarrow}
\newcommand{\evtl}{\mathrm{evtl}}
\numberwithin{equation}{section}
\theoremstyle{plain}
\newtheorem{theorem}{Theorem}[section]
\newtheorem{tech-theorem}[theorem]{Technical Theorem} 
\newtheorem{lemma}[theorem]{Lemma}
\newtheorem*{theorem*}{Theorem}
\newtheorem{corollary}[theorem]{Corollary}
\newtheorem*{lemma*}{Lemma}
\theoremstyle{definition}
\newtheorem*{claim*}{Claim}
\newtheorem*{definition*}{Definition}
\newtheorem{definition}[theorem]{Definition}
\newtheorem{num}[theorem]{}
\theoremstyle{remark}
\newtheorem{remark}[theorem]{Remark}
\newcommand{\sig}{\operatorname{Sgn}}
\newcommand{\cpt}{\text{cpt}}
\newcommand{\id}{\textup{Id}}
\newcommand{\R}{\mathbb R}
\newcommand{\C}{\mathbb{C}}
\begin{document}
	
\title{C*-Algebraic Higher  Signatures and an Invariance Theorem in Codimension Two}

\author{Nigel Higson, Thomas Schick, and Zhizhang Xie}

\date{}

\maketitle

\begin{abstract}
\noindent
We revisit the construction of signature classes in $C^*$-algebra $K$-theory, and develop a variation that allows us to prove equality of signature classes in  some situations involving homotopy equivalences of noncompact manifolds that are only defined outside of a compact set.  As an application, we prove a counterpart for signature classes of a codimension two vanishing theorem for the index of the Dirac operator on spin manifolds (the latter  is due to   Hanke, Pape and Schick, and is a development of well-known work of Gromov and Lawson).
\end{abstract}

\section{Introduction}

Let $X$ be a connected, smooth, closed and oriented manifold of dimension $d$, and let  $\pi$ be a discrete group.  Associated to each  $\pi$-principal bundle  $\widetilde X$ over $X$ there is a \emph{signature class} 
\[
\sig_{\pi} (\widetilde X/X) \in K_d(C^*_r(\pi))
\]
in the topological $K$-theory of the reduced group $C^*$-algebra of $\pi$. The   signature class can be defined either index-theoretically or combinatorially, using a triangulation.  Its main property, which is easiest to see from the combinatorial point of view, is that if 
$
h\colon Y \to X
$
is a homotopy equivalence of smooth, closed, oriented manifolds that is covered by a
map of $\pi$-principal bundles $\widetilde Y\to\widetilde X$, then
\begin{equation}
\label{eq-htpy-invariance}
\sig_{\pi}(\widetilde X/X) = \sig_{\pi}(\widetilde Y/Y) \in K_d \bigl (C^*_r(\pi)\bigr ) .
\end{equation}
This is the point of departure for the $C^*$-algebraic attack on Novikov's
higher signature conjecture (see \cite{Rosenberg93,Rosenberg16} for surveys). 

This note has two  purposes. The first  is to simplify some of the machinery involved in the combinatorial approach to   signature classes in $C^*$-algebra $K$-theory.  The main novelty is a technique to construct   signature classes that uses only finitely-generated and projective modules (in part by appropriately adjusting the $C^*$-algebras that are involved).  

The second purpose is to prove a codimension-two invariance result for the signature that is  analogous to a recent theorem of Hanke, Pape and Schick \cite{HankePapeSchick15} about positive scalar curvature (which is in turn developed from well-known work of Gromov and Lawson \cite[Theorem 7.5]{MGBL83}).  To prove the theorem we shall use generalizations of the signature within the context of coarse geometry. These have previously been  studied closely (see for example \cite{Roe96}), but our new approach to the signature has some definite advantages in this context (indeed it was designed with coarse geometry in mind).
Here is the result. Let
\[ 
h: N \longrightarrow M
\]
be a smooth, orientation-preserving homotopy equivalence  between two smooth, closed, oriented manifolds of dimension $d{+}2$. 
Let $X$ be a smooth, closed, oriented submanifold of $M$ of codimension $2$ (and hence dimension $d$), and assume that 
$ h $
is transverse to $X$, so that 
the inverse image 
\[ 
Y = h^{-1}[X] 
\]
is a smooth, closed, oriented submanifold of $N$ of  codimension $2$. Suppose $\widetilde X$ is the universal cover of $X$. Then it pulls back along the map $h$ to give a $\pi_1(X)$-covering space $\widetilde Y$  of $Y$.

\begin{theorem} 
\label{thm-main-theorem}
Assume that 
\begin{enumerate}[\rm (i)]
\item $\pi_1(X) \to \pi_1(M)$ is injective;
\item  $\pi_2(X)\to \pi_2(M)$ is surjective; and 
\item the normal bundle of $X$ in $M$ is trivializable.
\end{enumerate}
Then 
  \begin{equation}\label{eq:main_eq}
2 \bigl (\sig_{\pi_1(X)}(\widetilde X/X) -  \sig_{\pi_1(X)}(\widetilde
Y/Y) \bigr ) = 0 \quad \text{in $K_{d}(C ^*_r(\pi_1(X)))$.}
\end{equation}
\end{theorem}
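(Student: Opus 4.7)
The plan is to adapt the codimension-two vanishing argument of Hanke, Pape, and Schick \cite{HankePapeSchick15} from Dirac indices to signature classes, exploiting the flexible signature construction introduced earlier in this paper (designed, as emphasized in the introduction, with coarse geometry in mind). By hypothesis (i), let $\widehat M \to M$ denote the connected cover with $\pi_1(\widehat M) \cong \pi_1(X)$; the submanifold $X$ lifts tautologically to $\widehat M$, and we keep the name $X$ for the lift. Let $\widehat N \to N$ be the corresponding cover of $N$ pulled back via $h$. The homotopy equivalence $h$ then lifts to a proper $\pi_1(X)$-equivariant homotopy equivalence $\widehat h \colon \widehat N \to \widehat M$, transverse to $X$ with preimage the lifted $Y$. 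Hypothesis (ii), together with (i), implies via the long exact sequence of the pair that $\pi_2(\widehat M, X) = 0$. Combined with the trivialization from (iii), this makes available the HPS-style coarse ``$X \times \R^2$''-model for $\widehat M$: the tubular $D^2$-neighborhood of $X$ in $\widehat M$ extends, via a proper construction exploiting the $2$-connectedness of $X \hookrightarrow \widehat M$, to a coarse geometric structure along which the signature class can be decomposed in the normal $\R^2$ direction. The analogous construction on $(\widehat N, Y)$ is compatible with $\widehat h$ up to proper homotopy.

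Next, apply the paper's new signature class construction to the noncompact covers $\widehat M$ and $\widehat N$, producing coarse $\pi_1(X)$-equivariant signature classes in the appropriate $K$-theory group. The properness of $\widehat h$, together with the coarse homotopy invariance supplied by the new construction (a coarse refinement of (\ref{eq-htpy-invariance}) and precisely the feature that the new machinery was designed to accommodate), gives equality of these two coarse signature classes. A Bott-type reduction along the coarse $\R^2$-structure from the first step then sends each coarse class into $K_d(C^*_r(\pi_1(X)))$, identifying the coarse signature of $\widehat M$ with $2\,\sig_{\pi_1(X)}(\widetilde X/X)$ and that of $\widehat N$ with $2\,\sig_{\pi_1(X)}(\widetilde Y/Y)$. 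The factor of $2$ --- absent from the Dirac analogue of \cite{HankePapeSchick15} --- arises from the symmetric nature of the signature class in its $K$-theoretic transfer along the codimension-two normal bundle, and accounts directly for the ``$2$'' appearing in (\ref{eq:main_eq}).

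The principal obstacle is the final Bott-reduction step: verifying that the codimension-two restriction along the normal $\R^2$-bundle sends the coarse signature class to exactly twice the submanifold signature class, and tracking this factor through the new combinatorial machinery without losing the torsion information. This is where the flexibility of working with finitely-generated projective modules on noncompact covers, and with homotopy equivalences defined only outside a compact set, plays the decisive role; the preceding topological preparations (passage to $\widehat M$, coarse $\R^2$-structure) exist precisely to set up this reduction.
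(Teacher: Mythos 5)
The topological opening is right: passing to the cover $\widehat M$ with $\pi_1(\widehat M)\cong\pi_1(X)$, pulling back to $\widehat N$ along $h$, using (ii)+(i) to get $\pi_2(\widehat M,X)=0$, and using (iii) for the product tubular neighborhood $X\times D^2$ are all exactly what the paper does, and these are the genuine HPS ingredients.

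The core of your argument, however, is a step that does not exist either in the paper or in \cite{HankePapeSchick15}. You posit a ``coarse $X\times\R^2$-model for $\widehat M$'' and a ``Bott-type reduction along the coarse $\R^2$-structure'' sending the coarse signature of $\widehat M$ to $2\,\sig_{\pi_1(X)}(\widetilde X/X)$. But $\widehat M$ is only $X\times D^2$ in a tubular neighborhood of $X$; away from $X$ it carries no $\R^2$-product coarse structure, and there is no codimension-two partitioned-manifold theorem or iterated Mayer--Vietoris over $\R^2$ set up anywhere in the paper (or the literature you'd want to invoke) that could implement such a transfer. You acknowledge this step as ``the principal obstacle'' but treat it as a computation to verify rather than as a construction that first has to be built; as written, the proof has a genuine gap precisely where the factor of $2$ must be produced.

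The paper's actual resolution uses a different decomposition that you skip entirely. One removes $\mathring D(X)\cong X\times \mathring D^2$ from $\widehat M$ and \emph{doubles} along the boundary $X\times S^1$, obtaining $W=(\widehat M\setminus\mathring D(X))\cup_{X\times S^1}(\widehat M\setminus\mathring D(X))$, and similarly $Z$ from $\widehat N$. Now $W$ and $Z$ are partitioned over $\R$ by the \emph{codimension-one} hypersurfaces $X\times S^1$ and $Y\times S^1$, so the ordinary Roe partitioned-manifold machinery applies. Crucially, while $\widehat h:\widehat N\to\widehat M$ is an honest proper homotopy equivalence (as you correctly note), the induced map $\varphi:Z\to W$ on the doubled spaces is \emph{not} a homotopy equivalence---it is only an eventual one. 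This is the step that actually forces the ``eventual homotopy equivalence'' machinery (Theorem~\ref{thm-eventual-homotopy-invariance}); in your setup with $\widehat h$ alone there is nothing that requires it. After the split injection $\pi_1(X\times S^1)\to\pi_1(W)$ (Lemma~\ref{lem-splitting2}) is used to take the cover $\widetilde W$, Theorem~\ref{thm-eventual-homotopy-invariance} and Corollary~\ref{cor-partition} give $2^{\varepsilon'}(\sig_\pi(\widetilde{X\times S^1}/X\times S^1)-\sig_\pi(\widetilde{Y\times S^1}/Y\times S^1))=0$, and then the $S^1$ factor is stripped off using the K\"unneth isomorphism and the product formula of Paragraph~\ref{product}. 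The factor of $2$ in the theorem is not a single ``transfer along $\R^2$'' phenomenon: it emerges from the combination of the parity-dependent $2^{\varepsilon'}$ in the partitioned-manifold / eventual-invariance theorem (which depends on the parity of $\dim(X\times S^1)=d+1$) with the parity-dependent $2^{\varepsilon(X,S^1)}$ in the product formula (which depends on the parity of $\dim X=d$); these always sum to $1$, giving exactly $2^1$ in every dimension. Your proposal is missing the doubling, the split-injection/covering step, the identification of $\varphi$ (not $\widehat h$) as the eventual homotopy equivalence, and the two-stage codimension-one-plus-K\"unneth reduction that actually produces the factor of $2$.
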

     
Let us make some remarks about this theorem.  First, we do not know whether the factor of $2$ in \eqref{eq:main_eq} is
    necessary,  but in view of what is known about   $L$-theory concerning
     splitting obstructions and ``change of decoration,'' where non-trivial
    $2$-torsion phenomena do occur,  we suspect that there are situations where
     the factor of $2$ in \eqref{eq:main_eq} is indeed necessary.

    In surgery theory (symmetric) signatures  are
    defined  in the $L$-theory groups of the group ring $\mathbb{Z}[\pi]$. There
    are comparison maps from these $L$-groups to our $K$-groups (at least after
    inversion of $2$; see \cite{Land-Nikolaus} for a thorough discussion
    of this issue) and the comparison maps take the respective signatures to each other (this correspondence is developed systematically in  \cite{HigsonRoeSurgToAnal1,HigsonRoeSurgToAnal2,MR2220524}). We leave it as an open question to lift Theorem~ \ref{thm-main-theorem} to an equality of $L$-theoretic higher signatures.

    Finally, in view of the strong Novikov conjecture, one might expect that all
    interesting homotopy invariants of a closed oriented manifold $M$ with
    universal cover $\widetilde M$ can be
   derived from the signature class 
   \[
   \sig_{\pi_1(M)}(\widetilde M/M)\in K_*(C^*_r(\pi_1(M))).
   \]
    Theorem
    \ref{thm-main-theorem} shows that $2\sig_{\pi_1(X)}(X)\in
    K_*(C_r^*(\pi_1(X)))$ is a new homotopy invariant of the ambient manifold
    $M$, under the conditions of the theorem. It is an interesting
    question to clarify the relationship between  this new invariant and the signature class of $\widetilde M / M$.

Here is a brief outline of the paper.
In Section~\ref{sec:sign-prop} we shall summarize the key properties of the   signature class, including a new invariance property that is required for the proof of Theorem~\ref{thm-main-theorem}.  In Section~\ref{sec-proof} we shall present the proof of Theorem~\ref{thm-main-theorem}.  As in the work of Hanke, Pape and Schick, the proof involves coarse-geometric
ideas, mostly in the form of the partitioned manifold index theorem of Roe
\cite{Roe88}.  With our new approach to the signature, the proof of the
positive scalar curvature result can be adapted with very few changes.
  In the remaining sections we shall give our variation on the construction of the signature class, using as a starting point the approach of Higson and Roe in  \cite{HigsonRoeSurgToAnal1,HigsonRoeSurgToAnal2}.

\section{Properties of the C*-Algebraic Signature}
\label{sec:sign-prop}

In this section we shall review some  features of the $C^*$-algebraic signature, and then introduce a new property of the signature  that we shall use in the proof of Theorem~\ref{thm-main-theorem}.  We shall not give the definition of the signature in this section, but we shall  say more later, in Section~\ref{sec-controlled-signature}. 

\begin{num}
\label{num-higher-signature}
Let $\pi$ be a discrete group and let $X$ be a smooth, closed and oriented manifold of dimension $d$ (without boundary).  Mishchenko, Kasparov and others   have associated to every  principal $\pi$-bundle $\widetilde X$ over $X$ a \emph{signature class} in the topological $K$-theory of the reduced  $C^*$-algebra of the group $\pi$: 
 \begin{equation}
 \label{eq-signature-class}
 \sig_{\pi}(\widetilde X/X) \in K_d (C^*_r(\pi)).
 \end{equation}
\end{num}

\begin{num}
\label{num-homotopy-invariant}
The first and most important property of the signature class is that it is an \emph{oriented homotopy invariant}: given a commuting diagram of principal $\pi$-fibrations
 \[
 \xymatrix{ \widetilde Y \ar[r] \ar[d] & \widetilde X \ar[d] \\
 Y \ar[r] _h & X
 }
 \]
in which  the bottom map is an orientation-preserving homotopy equivalence, we have  
 \[
  \sig_{\pi}(\widetilde Y / Y)  =  \sig_{\pi}(\widetilde X / X)  \in K_d (C^*_r(\pi)).
 \]
 \end{num}
 
\begin{num}
Assume that  $X$ is in addition connected, and let  $\widetilde X$ be
{a} universal covering manifold.\footnote{It is not  necessary to keep
  track of basepoints in what follows since inner automorphisms of $\pi$  act
  trivially on $K_*(C^*_r(\pi))$.} In this case we shall abbreviate our
notation and write
 \[
  \sig_{\pi_1(X)}(X) \in K_d (C^*(\pi_1(X))),
 \]
omitting mention of the cover.  If $d$ is congruent to zero, mod four, then we  can recover  the usual signature of the base manifold $X$  from the $C^*$-algebraic signature $   \sig_{\pi_1(X)}(X) $ by applying the homomorphism
 \[
  \operatorname{Trace}_\pi\colon K_d (C^*_r(\pi)) \longrightarrow \R
  \]
 associated to the canonical trace on $C^*_r (\pi)$ as a consequence of
 Atiyah's $L^2$-index theorem.   But the $C^*$-algebraic signature usually contains much more information.  It  is conjectured to determine the so-called higher signatures of $X$ \cite{MR0292913, MR0362407, MR1388299}. This is the  \emph{Strong Novikov conjecture}, and it  is known in a great many cases,  for example for all fundamental groups of complete, nonpositively curved manifolds \cite{GK88}, for all groups of finite asymptotic dimension \cite{GY98}, and for all linear groups \cite{MR2217050}. 
\end{num}

\begin{num}
 Let us return to general covers. The signature class is   functorial, in the sense that if $\alpha \colon \pi_1  \to \pi_2$ is an injective group homomorphism, and if  the $\pi_1$- and $\pi_2$-principal bundles $\widetilde X_1$ and $\widetilde X_2$ are linked by a    commuting diagram 
  \[
\xymatrix{
\widetilde X_1 \ar[r] \ar[d]  & \widetilde X_2 \ar[d] \\
X \ar@{=}[r] & X 
}
\]
in which the top map is $\pi_1$-equivariant, where $\pi_1$ acts on $\widetilde X_2$ via $\alpha$, then 
\[
 \alpha_*   \colon \sig_{\pi_1} (\widetilde X_1 / X) \longmapsto \sig_{\pi_2}(\widetilde X_2 / X)  \in K_d (C^*_r(\pi_2))  .
\]
(We recall here that the reduced group $C^*$-algebra  is functorial for injective group homomorphisms, and therefore so is its $K$-theory.\footnote{It is also possible to define a signature class in the $K$-theory  of the \emph{full} group $C^*$-algebra, and this version is functorial for arbitrary group homomorphisms.  But since it  is in certain respects more awkward to manipulate with we shall mostly confine our attention in this paper to the signature class for the reduced $C^*$-algebra.}) 
\end{num}

\begin{num}
\label{num-bordism-invariant}
The signature is   a \emph{bordism invariant} in the obvious sense: if $W$ is
a compact oriented smooth manifold with boundary, and if $\widetilde W$ is a
principal $\pi$-bundle over $W$, then the signature class associated to the
restriction of $\widetilde W$ to $\partial W$ is zero; see \cite[Section 4.2]{HigsonRoeSurgToAnal2}.
\end{num}

\begin{num}
A locally compact space $Z$ is \emph{continuously controlled over $\R$} if it is equipped with a proper and continuous map 
\[
c \colon Z \longrightarrow \R .
\]
If $Z$ is a smooth and oriented manifold of dimension $d{+}1$  that is continuously controlled over  $\R$, and if   the  control map $c\colon Z \to \R$  is smooth, then the inverse image
\[
X = c^{-1}[a] \subseteq Z
\]
 of any regular value $a\in \R$ is a smooth, closed, oriented manifold.  Given a $\pi$-principal bundle $\widetilde Z$ over $Z$, we can restrict it to $X$ and then  form the signature class 
\eqref{eq-signature-class}.  Thanks to the bordism invariance of the signature class, the $K$-theory class we obtain is independent of the choice of regular value $a\in \R$.  Indeed it depends only on the homotopy class of the proper map $c$.

\begin{definition*}
We shall call the signature class of $X$ above  the \emph{transverse signature class} of the continuously controlled manifold $Z$, and denote it by
\[
\sig_{c,\pi} (\widetilde Z/Z) : = \sig_{\pi}(\widetilde X/X) \in K_d(C^*_r(\pi)) .
\]
\end{definition*}

  \end{num}
  
  \begin{num}
  \label{number-controlled-signature1}
To prove the main theorem of the paper we shall need to invoke an invariance property of the transverse signature class of the following sort. Suppose given a morphism 
\[
h\colon Z \longrightarrow W
\]
of smooth, oriented manifolds that are continuously controlled over $\R$, and suppose that $h$ is, in  suitable sense,  a controlled homotopy equivalence. The conclusion is that if $Z$ and $W$ are equipped with $\pi$-principal bundles that are compatible with the equivalence, then (up to a factor of $2$ that we shall make precise in a moment) the associated  transverse signature classes of $Z$ and $W$ are equal.

Actually we shall need to consider situations where $h$ will be only defined
on the complement of a compact set; see Paragraph~\ref{number-eventual-defs}.
But  for now we shall ignore this additional complication.

 \begin{definition*}
 Let $Z$ and $W$ be locally compact  spaces that are continuously controlled over $\R$, with control maps $c_Z$ and $c_W$. A continuous and proper map $f\colon Z\to W$ is \emph{continuously controlled over $\R$} if  the symmetric difference 
 \[
 c_Z^{-1}[0,\infty)\,\, \triangle\,\,  f^{-1}  c_W^{-1}[0,\infty )
  \]
 is a compact subset of $Z$.
 \end{definition*}
 
Thus the map $f\colon Z\to W $  is  continuously controlled over $\R$ if it is compatible with  the decompositions of $Z$ and $W$ into positive and negative parts (according to the values of the control maps),  modulo compact sets.
 
 \begin{definition*}
 Let $Z$ and $W$ be complete Riemannian manifolds, both  with bounded geometry \cite[Section 1]{Attie94}.  

\begin{enumerate}[\rm (a)]
\item  A \emph{boundedly controlled homotopy equivalence} from $Z$ to $W$ is a  pair of smooth maps 
 \[
 f\colon Z \longrightarrow W \quad \text{and}\quad g \colon W \longrightarrow Z ,
 \]
together with a pair of smooth homotopies 
\[
H_Z \colon Z\times [0,1]\longrightarrow Z 
\quad \text{and}\quad H_W \colon W\times [0,1] \longrightarrow W
\]
 between the compositions of $f$ with $g$ and the identity maps on $Z$ and $W$,
for which all the maps have bounded derivatives,   meaning that, for example,
$\sup_{X\in TZ, ||X||\le 1} ||Df(X)||<\infty$.   

\item If $Z$ and $W$ are continuously controlled over $\R$, then the above
boundedly controlled homotopy equivalence  is in addition \emph{continuously controlled over $\R$} if all the above maps are continuously controlled over $\R$.

\item The above  {boundedly controlled homotopy equivalence} is \emph{compatible} with given  $\pi$-principal bundles over $Z$ and $W$ if all the above maps lift to bundle maps. 
\end{enumerate}

 \end{definition*}
 
The following result is a small extension of Roe's partitioned manifold index theorem  \cite{Roe88}.  We shall prove it in Section~\ref{sec-transverse-signature}.
   
\begin{theorem}
\label{thm-bounded-homotopy-invariance}
 Let $Z$ and $W$ be $(d{+}1)$-dimensional, complete {oriented} Riemannian manifolds  with bounded geometry  that are continuously controlled over $\R$.  If  $\widetilde Z$ and $\widetilde W$ are principal $\pi$-bundles over $Z$ and $W$, and if there is a 
 boundedly controlled {orientation-preserving} homotopy equivalence between $Z$ and $W$ that is continuously controlled over $\R$ and compatible with $\widetilde Z$ and $\widetilde W$, then 
\[
2^\varepsilon \big(\sig _{c,\pi}  (\widetilde Z/ Z) - \sig _{c,\pi}  (\widetilde W/ W) \big) = 0 
\]
in $K_d(C^*_r(\pi))$, where  
\[
\varepsilon = 
\begin{cases}  
0 & \text{if $d $ is even} \\
1  & \text{if $d $ is odd.} 
\end{cases}
\]
\end{theorem}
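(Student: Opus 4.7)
The plan is to combine two ingredients: a coarse (or ``global'') signature class for each of the complete manifolds $Z$ and $W$, together with an appropriate partitioned manifold index theorem that extracts the transverse signature class from this global data. The partition is supplied, of course, by the control maps $c_Z\colon Z\to\R$ and $c_W\colon W\to \R$.

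First I would construct, for a complete $(d{+}1)$-dimensional oriented Riemannian manifold $Z$ with bounded geometry and a $\pi$-principal bundle $\widetilde Z$, a coarse/bounded version of the signature class living in $K_{d+1}$ of an appropriate Roe-type $C^*$-algebra of $Z$ twisted by $\pi$. For the purposes of this paper it is natural to use the finitely-generated-projective combinatorial model of the signature that is developed later in the article, applied to a bounded-geometry triangulation of $Z$; the bounded geometry hypothesis is what makes the construction meaningful, and the continuous control over $\R$ is what allows a further ``partition'' refinement of the ambient $C^*$-algebra.

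Second, I would show that the continuous control of $c_Z$ over $\R$, together with the bordism invariance summarised in Paragraph~\ref{num-bordism-invariant}, allows one to read the transverse signature class $\sig_{c,\pi}(\widetilde Z/Z)$ off as the image of this coarse signature class under a natural boundary/partition map
\[
\partial_c \colon K_{d+1}(\text{coarse algebra of } Z) \longrightarrow K_d(C^*_r(\pi)).
\]
This is precisely the content of Roe's partitioned manifold index theorem \cite{Roe88}, transcribed into the new combinatorial framework; the point of choosing a combinatorial signature class is that partition-style boundary maps are entirely transparent at the chain-level and commute in an obvious way with the bounded-control data.

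Third, I would establish that a boundedly controlled homotopy equivalence $f\colon Z\to W$, compatible with the $\pi$-principal bundles, induces a morphism on the relevant coarse $C^*$-algebras that takes the coarse signature class of $Z$ to that of $W$, up to $2$-torsion in odd total dimension. In the combinatorial model this amounts to the standard $L$-theoretic assertion that a chain homotopy equivalence between Poincaré complexes of dimension $d{+}1$ induces an isomorphism of symmetric signature classes, with the usual factor of $2$ appearing in odd dimensions because one must symmetrise a chain map that is not \emph{a priori} a morphism of Poincaré complexes. The further continuous control of $f$ over $\R$ ensures that these morphisms are compatible with the partition maps $\partial_c$ on both sides, so that applying $\partial_c$ yields the desired equality in $K_d(C^*_r(\pi))$ after multiplication by $2^\varepsilon$.

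The main obstacle is the second step, namely identifying the transverse signature class as the image under $\partial_c$ of a coarse class that is genuinely functorial under boundedly controlled maps. In Roe's original framework this is done analytically, but here we want a construction that is compatible with the finitely-generated projective combinatorial picture developed in the later sections; getting the bookkeeping right, and in particular keeping track of the factor of $2$ so that it appears precisely in odd dimensions and nowhere else, is where most of the work will go.
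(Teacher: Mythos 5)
Your overall architecture matches the paper's: build a coarse signature class $\sig_\pi(\widetilde W/W)\in K_{d+1}(C^*_\pi(\widetilde W))$ from a bounded-geometry triangulation, prove that a boundedly controlled homotopy equivalence identifies the coarse signature classes of $Z$ and $W$, and then apply a Mayer--Vietoris boundary map (the partitioned-manifold index theorem) to land in $K_d(C^*_r(\pi))$. So the first and third ingredients are exactly the paper's, and your concern about getting the bookkeeping right is well-placed.

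However, you have misattributed the source of the factor $2^\varepsilon$, and this is a real gap. You assert in your second step that $\partial_c$ sends the coarse signature class to the transverse signature class on the nose, and in your third step that the homotopy equivalence only identifies the coarse signature classes ``up to $2$-torsion in odd total dimension,'' which you trace to the need to symmetrise chain maps. Both claims are wrong. In the Higson--Roe Hilbert--Poincar\'e complex framework, the coarse signature class is an \emph{exact} boundedly controlled oriented homotopy invariant --- no factor of $2$ appears there; this is the content of \cite[Theorem 4.3]{HigsonRoeSurgToAnal1}, and the operators $D$ are already symmetrised in the construction (the replacement of $D$ by $\tfrac 12(D+(-1)^{(d-p)p}D^*)$). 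The factor of $2^\varepsilon$ arises instead in the partitioned-manifold boundary map: the partitioned-manifold index theorem, combined with the product structure of the signature operator, shows that $\partial$ sends $\sig_\pi(\widetilde W/W)$ to $\sig_{c,\pi}(\widetilde W/W)$ when $d$ is even, but to \emph{twice} $\sig_{c,\pi}(\widetilde W/W)$ when $d$ is odd, because in that case the signature operator on $X\times\R$ is the direct sum of two copies of the product of the signature operators on $X$ and $\R$ (both odd-dimensional). This is Corollary~\ref{cor-partition} and the surrounding discussion. So your step 2 is missing the factor of $2^\varepsilon$, and your step 3 should contain no factor at all. Note also a parity slip: you write that the factor appears in ``odd total dimension,'' i.e.\ $d+1$ odd, hence $d$ even --- whereas $\varepsilon=1$ corresponds to $d$ odd, equivalently $d+1$ even.
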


\end{num}

\begin{num}
\label{number-eventual-defs}
As we already noted,  we shall actually need a slightly stronger version of Theorem~\ref{thm-bounded-homotopy-invariance}. It involves the following concept:

\begin{definition*}
	Let $Z$ and $W$ be locally compact Hausdorff spaces. An \emph{eventual map} from $Z$ to $W$ is  a continuous and proper map
	\[
	Z_1 \longrightarrow W ,
	\]
where $Z_1$ is   a closed subset of  $ Z$ whose complement has compact closure.  Two eventual maps from $Z$ to $W$ are \emph{equivalent} if they agree on the  complement of some compact set in $Z$.
\end{definition*}

There is an obvious category of locally compact spaces and equivalence classes of eventual maps. Let us denote the morphisms as follows: $ Z \morphism  W$.

\begin{definition*}
Two eventual morphisms 
\[
f_0, f_1 \colon Z \morphism  W
\]
 are \emph{homotopic} if there is an eventual morphism 
$g\colon Z\times [0,1]\morphism W$  for which the compositions 
\[
Z \rightrightarrows Z{\times}[0,1] \stackrel g \morphism W
\]
with the inclusions of $Z$ as $Z{\times}\{0\}$ and $Z{\times} \{ 1\}$ equal to  $f_0$ and $f_1$, respectively, in the category of locally compact spaces and eventual maps.
\end{definition*}

Homotopy is an equivalence relation on morphisms  that is compatible with composition, and so we obtain a notion of \emph{eventual homotopy equivalence}.  
\end{num}

\begin{num} It is clear how to define the concept of continuously controlled eventual morphisms between locally compact spaces that are continuously controlled over $\R$, as well as eventual  homotopy equivalences  that are continuously controlled over $\R$. And we can speak of  \emph{boundedly  controlled} eventual homotopy equivalences  between bounded geometry   Riemannian manifolds  $Z$ and $W$, and compatibility of these with $\pi$-principal bundles on $Z$ and $W$.
 \end{num}
 
 \begin{num}
   An orientation of a manifold $Z$ of dimension $d$ is a class in
    $H^{\text{lf}}_d(Z)$, the homology of locally finite chains, that maps to a generator in each group $H^{ {\text{lf}}}_d (Z, Z\setminus\{z\})$.   We shall call the image of the orientation class under the map 
 \[
 H^{\text{lf}}_d(Z) \longrightarrow \varinjlim_K H^{\text{lf}}_d (Z, K) ,
 \]
 where the direct limit is over the compact subsets of $Z$, the associated
 \emph{eventual orientation class}.  The direct limit is functorial for
 eventual morphisms, and so we can speak of an eventual  homotopy equivalence being \emph{orientation-preserving}.

 The  invariance property   that we shall use to prove Theorem~\ref{thm-main-theorem} is as follows:

\begin{theorem}
\label{thm-eventual-homotopy-invariance}
Let $Z$ and $W$ be  $(d{+}1)$-dimensional, connected complete
 {oriented}  Riemannian manifolds  with bounded geometry  that are continuously controlled over $\R$.  If  $\widetilde Z$ and $\widetilde W$ are principal $\pi$-bundles over $Z$ and $W$, and if there is a 
 boundedly controlled eventual homotopy equivalence between $Z$ and $W$ that
 is continuously controlled over $\R$,  orientation-preserving, and
 compatible with $\widetilde Z$ and $\widetilde W$, then 
\[
2^\varepsilon \big(\sig _{c,\pi}  (\widetilde Z/ Z) - \sig _{c,\pi}  (\widetilde W/ W) \big) = 0 
\]
in $K_d(C^*_r(\pi))$,  where  
\[
\varepsilon = 
\begin{cases}  
0 & \text{if $d $ is even} \\
1  & \text{if $d$ is odd.} 
\end{cases}
\]
\end{theorem}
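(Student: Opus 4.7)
The plan is to reduce Theorem~\ref{thm-eventual-homotopy-invariance} to Theorem~\ref{thm-bounded-homotopy-invariance} by replacing $Z$ and $W$ with closed oriented manifolds $Z'$ and $W'$ that have the same transverse signature classes as $Z$ and $W$, respectively, but that admit a \emph{genuine} (not merely eventual) boundedly controlled homotopy equivalence. The principle is that the transverse signature class, being defined through a regular level set of the control map and bordism invariance (Paragraph~\ref{num-bordism-invariant}), depends only on the structure of the manifold at infinity; and this is precisely the regime where the eventual homotopy equivalence actually operates.

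First I would fix a smooth representative $f\colon Z_1\to W$ of the given eventual map with bounded derivatives, together with an eventual inverse $g\colon W_1\to Z$ and the two eventual homotopies. Since $f$ and $g$ are continuously controlled over $\R$, there is a compact subset of $Z$ outside of which $c_Z(z)\ge 0$ if and only if $c_W(f(z))\ge 0$, and similarly for $g$. Choose regular values $a$ of $c_Z$ and $b$ of $c_W$ with $a$ large enough that $X := c_Z^{-1}(a)$ lies in $Z_1$ and outside the compact exception set (and analogously for $Y := c_W^{-1}(b)$ in $W$). Then $f$ carries $X$ into the positive half $c_W^{-1}(0,\infty)$, and $g$ symmetrically. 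A transversality-and-collar argument, uniformised at infinity by the bounded-geometry assumption, then allows us to homotope $f$ within its eventual equivalence class so that $f(X)\subseteq Y$, $f^{-1}(Y)=X$, and the restriction is a boundedly controlled homotopy equivalence of pairs $(Z^+,X)\to(W^+,Y)$, where $Z^+ := c_Z^{-1}[a,\infty)$ and $W^+ := c_W^{-1}[b,\infty)$; similarly for $g$. The bundle compatibility is preserved throughout.

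Next I would form the smooth doubles $Z' := Z^+\cup_X Z^+$ and $W' := W^+\cup_Y W^+$ (glued with collars). Each is a closed complete oriented Riemannian manifold of bounded geometry, continuously controlled over $\R$ by the function equal to $c_Z$ on one copy of $Z^+$ and $2a-c_Z$ on the other, and correspondingly for $W'$. The $\pi$-principal bundles double canonically, and $f$ and $g$ double to a genuine, boundedly controlled, continuously controlled, orientation-preserving homotopy equivalence $Z'\to W'$ compatible with the doubled bundles. Because $X$ is a regular level set of the doubled control map on $Z'$, bordism invariance gives $\sig_{c,\pi}(\widetilde{Z'}/Z') = \sig_\pi(\widetilde X/X) = \sig_{c,\pi}(\widetilde Z/Z)$, and similarly for $W$. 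Applying Theorem~\ref{thm-bounded-homotopy-invariance} to $Z'$ and $W'$ then delivers the desired identity.

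The main obstacle is the deformation step: in the eventual category, smoothly adjusting $f$ and $g$ so that they carry the non-compact hypersurface $X$ onto $Y$ (and vice versa) and become genuine boundedly controlled homotopy equivalences of pairs, all while preserving boundedness, continuous control over $\R$, and compatibility with the principal $\pi$-bundles. Because $X$ and $Y$ are not compact, the usual local transversality toolkit must be applied uniformly at infinity, and the bounded-geometry hypothesis is precisely what makes this possible. Once this technical step is complete, the doubling construction and the appeal to Theorem~\ref{thm-bounded-homotopy-invariance} are formal.
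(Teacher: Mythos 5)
The deformation step at the heart of your proposal — homotoping $f$ (within its eventual class) so that $f^{-1}(Y)=X$ and the restriction becomes a boundedly controlled homotopy equivalence of pairs $(Z^+,X)\to(W^+,Y)$ — is not a transversality statement but a \emph{splitting} statement, and it is false in general. Transversality will make $f^{-1}(Y)$ a smooth hypersurface, and with some work you could even arrange $f^{-1}(Y)=X$ as sets, but nothing forces the restricted map $f\vert_X\colon X\to Y$ to be a homotopy equivalence (let alone for the pair $(Z^+,X)$). Whether a homotopy equivalence can be split along a codimension-one submanifold is governed by nontrivial obstructions (Wall, Cappell); in fact the factor of $2^\varepsilon$ in the statement and the paper's own remark about ``splitting obstructions and change of decoration'' in $L$-theory are there precisely because such a geometric splitting need not exist. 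If your deformation step were available, the two level sets would simply be homotopy equivalent, the transverse signature classes would be equal on the nose, and the $2^\varepsilon$ would be superfluous — a red flag that the step cannot be carried out. Once the splitting step is removed, the doubling construction has nothing to apply Theorem~\ref{thm-bounded-homotopy-invariance} to, since the doubled map $Z'\to W'$ is only well defined and only a genuine homotopy equivalence when $f$ respects the level sets in the strong sense you assumed.

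The paper's proof sidesteps this geometric obstruction by working algebraically: it passes to the quotient $C^*$-algebra $C^*_{\widetilde W,\pi,\evtl}(S,S)=C^*_{\widetilde W,\pi}(S,S)/C^*_{\widetilde W,\pi,\cpt}(S,S)$, where compactly supported operators are divided out. In that quotient an \emph{eventual} map already induces an honest chain map, so \cite[Theorem 4.3]{HigsonRoeSurgToAnal1} gives invariance of the eventual signature class with no splitting required (Theorem~\ref{thm:htpyinv}). An Eilenberg swindle along a ray to infinity (Lemma~\ref{lem-swindle}) then shows that the ambiguity introduced by passing to the quotient dies in $K_*(C^*_\pi(\widetilde W))$, and the partitioned-manifold index theorem (Corollary~\ref{cor-partition}) converts the resulting equality of coarse signature classes into the stated equality of transverse signature classes, with the $2^\varepsilon$ arising exactly where the partitioned index theorem produces it. Your bordism-invariance observation about level sets is correct and is indeed what makes the transverse signature well defined, but the invariance under eventual equivalences has to be proved at the level of Roe-type algebras, not by forcing the equivalence to restrict to the level sets.
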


We shall prove this result in Section~\ref{sec-eventual-htp-eq-invariance}.
 \end{num}

\begin{num}\label{product} Finally, we shall need a formula for the signature of a product manifold. Let $X$ be a smooth, closed and oriented manifold of dimension $p$, and $\widetilde X \to X$ a $\pi$-principal  bundle over $X$. Similarly,  let $Y$ be a smooth, closed and oriented manifold of dimension $q$, and $\widetilde Y \to Y$ a $\sigma$-principal  bundle over $Y$. There is a natural product map
\[
-\otimes-\colon K_p(C^*_r(\pi))\otimes K_q(C^*_r(\sigma)) \longrightarrow K_{p+q}(C^*_r(\pi {\times}\sigma))
\]
and   we have the following product formula: 
\[
\sig_{\pi{\times}\sigma}  (\widetilde X{\times}\widetilde{Y}/ X{\times }Y) =  2^{\varepsilon(X,Y)} \sig_{\pi}(\widetilde X/X)\otimes \sig_{\sigma}(\widetilde Y/Y)
\]
where 
\[
\varepsilon(X,Y) = 
\begin{cases}  
0 & \text{if one of $X$ or $Y$ is even-dimensional} \\
1  & \text{if both $X$ and $Y$ are   odd-dimensional.} 
\end{cases}
\]
This will be proved in Section~\ref{sec-products}.\end{num}

\section{Proof of the Codimension Two Theorem}
\label{sec-proof}

In this section, we shall prove  Theorem $\ref{thm-main-theorem}$ using the properties of the controlled signature that were listed above.

Let $h\colon N \to M$ be an orientation-preserving, smooth homotopy equivalence between smooth, closed, oriented manifolds of dimension $d+2$. Let $X$ be a smooth, closed, oriented submanifold of $M$ of codimension $2$ (and hence dimension $d$), and assume that 
the smooth map  
$ h: N \to M$
is transverse to $X$, so that 
the inverse image 
\[ Y = h^{-1}[X]  \]
is a smooth, closed, oriented, codimension-two submanifold of $N$. Assume in addition that 
\begin{enumerate}[\rm (i)]
	\item\label{it:pi1} $\pi_1(X) \to \pi_1(M)$ is injective;
	\item \label{it:pi2} $\pi_2(X)\to \pi_2(M)$ is surjective; and 
	\item the normal bundle of $X$ in $M$ is trivializable.
\end{enumerate}

 We fix a base point in $X$   and assume that $X$ is connected.
With assumption (i) above, we shall view $\pi_1(X)$  as a subgroup of $\pi_1(M)$. 	Let 
\[
p\colon \widehat  M\longrightarrow M
\]
be the covering of $M$ corresponding to the subgroup $\pi_1(X)\subset \pi_1(M)$. In other words, $\widehat M$ is the quotient space of the universal covering of $M$ by the subgroup $\pi_1(X)$. In particular, we have the following lemma.

\begin{lemma} 
 The inverse image of $X$ under  the   projection from 
$ \widehat M$ to $M$   is  a disjoint union of copies of coverings of $X$. The
component of the base point is homeomorphic to $X$, with homeomorphism given
by the restriction of the covering projection.\qed
\end{lemma}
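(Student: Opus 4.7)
The plan is to apply the standard Galois theory of covering spaces: classify connected covers by subgroups of the fundamental group, and identify fibers with coset spaces.

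First I would restrict the covering map $p$ to the preimage $p^{-1}(X)$. Since the restriction of a covering to any subspace is again a covering, $p^{-1}(X)\to X$ is a covering, and each connected component of $p^{-1}(X)$ is therefore a connected cover of $X$. This already gives the first assertion.

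For the second assertion, fix a basepoint $x_0\in X$ together with a lift $\widehat{x}_0\in p^{-1}(x_0)$. The usual path-lifting recipe identifies $p^{-1}(x_0)$ with the coset space $\pi_1(M,x_0)/\pi_1(X,x_0)$, sending $\widehat{x}_0$ to the trivial coset; here hypothesis (i) is used to view $\pi_1(X,x_0)$ as a genuine subgroup of $\pi_1(M,x_0)$. The monodromy action of $\pi_1(X,x_0)$ on $p^{-1}(x_0)$, which governs the decomposition of $p^{-1}(X)$ into connected components over $X$, is the restriction to $\pi_1(X,x_0)$ of the left translation action of $\pi_1(M,x_0)$ on $\pi_1(M,x_0)/\pi_1(X,x_0)$. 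Two points of the fiber lie in the same component of $p^{-1}(X)$ if and only if they lie in the same orbit, and the orbits are precisely the double cosets $\pi_1(X,x_0)\backslash\pi_1(M,x_0)/\pi_1(X,x_0)$.

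Finally, the orbit of the trivial coset is a single point, since $\pi_1(X,x_0)\cdot\pi_1(X,x_0)=\pi_1(X,x_0)$ as a left coset. The component of $p^{-1}(X)$ containing $\widehat{x}_0$ therefore has one-point fibers over $X$, so it is a covering of degree $1$, and consequently $p$ restricts to a homeomorphism from this component onto $X$. There is no real obstacle here: the entire argument is elementary covering space theory, with hypothesis (i) playing the only substantive role, namely ensuring that the inclusion of $\pi_1(X)$ into $\pi_1(M)$ makes the double coset description meaningful.
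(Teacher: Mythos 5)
Your argument is correct and is exactly the standard covering-space argument the paper is implicitly invoking (the lemma is marked with an immediate \qed and no written proof). Restricting the covering to $p^{-1}(X)$, identifying the fiber with cosets of $\pi_1(X)$ in $\pi_1(M)$ via hypothesis (i), and noting that the monodromy orbit of the trivial coset under $\pi_1(X)$ is a single point (so the basepoint component is a degree-one cover) is precisely the right reasoning.
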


Fix  the base point copy of  $X$ in the inverse image $p^{-1}(X)$.  We will  use the same notation, $X$ for this lift of the submanifold $X\subseteq M$:
  \[
X \subseteq p^{-1}(X) .
\]
 Denote by  $D(X)$   a closed tubular neighborhood of $X$ in $\widehat M$ and consider the smooth manifold
\[
\widehat  M\setminus  \mathring{D}(X) 
\]
(the circle denotes the interior) with boundary 
\[
\partial \bigl ( \widehat  M\setminus  \mathring{D}(X) \bigr ) \cong X\times S^1.
\]
The following lemma is contained in the proof of \cite[Theorem 4.3]{HankePapeSchick15}. 

\begin{lemma}
Under the assumptions \eqref{it:pi1} and \eqref{it:pi2} made above, the inclusion
\[
X\times S^1 \hookrightarrow  \widehat M  \setminus  \mathring{D}(X)
\]
induces  a split injection
\[
\pi_1(X\times S^1)   \to \pi_1(\widehat  M\setminus \mathring{D}(X)) 
\] 
on fundamental groups. \qed
\end{lemma}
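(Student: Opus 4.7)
The plan is to construct a splitting homomorphism $\pi_1(U)\to \pi_1(X\times S^1) = \pi_1(X)\times \mathbb Z$, where $U:=\widehat M\setminus \mathring D(X)$, by exhibiting it as a pair of homomorphisms to the two factors. Let $m\in \pi_1(U)$ denote the class of a meridian circle of $X$. For the $\pi_1(X)$ component, the map $\pi_1(U)\to \pi_1(\widehat M)=\pi_1(X)$ induced by inclusion suffices: the equality holds by the construction of $\widehat M$ and assumption (i), and, on $\pi_1(X\times S^1)$, this restricts to the projection $\pi_1(X)\times \mathbb Z\to \pi_1(X)$ because the meridian bounds a disc fibre of $D(X)$.

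For the $\mathbb Z$ component, I would construct a class $\alpha\in H^1(U;\mathbb Z)=\mathrm{Hom}(\pi_1(U),\mathbb Z)$ with $\alpha(m)=1$ and $\alpha$ trivial on the image of $\pi_1(X)\to \pi_1(U)$. The Mayer--Vietoris sequence for the decomposition $\widehat M = U\cup D(X)$ (with intersection homotopy equivalent to $X\times S^1$) shows that some $\alpha_0$ with $\alpha_0(m)=1$ exists precisely when the Mayer--Vietoris boundary $\delta[\mu]\in H^2(\widehat M)$ of the meridian generator $\mu\in H^1(X\times S^1)$ vanishes. A direct computation using the trivialization of the normal bundle identifies $\delta[\mu]$ with the image of the Thom class of the tubular neighbourhood in $H^2(\widehat M)$, i.e.\ with the Poincar\'e dual of $X$.

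The central step is then to show that this Poincar\'e dual vanishes. By condition (iii) its restriction to $X$ equals the Euler class of the normal bundle, which is zero. Conditions (i) and (ii) in turn force the inclusion $X\hookrightarrow \widehat M$ to be $2$-connected, since $\pi_1(\widehat M)=\pi_1(X)$ by construction of the cover and $\pi_2(\widehat M)=\pi_2(M)$, onto which $\pi_2(X)$ surjects by (ii). Relative Hurewicz then yields $H^k(\widehat M,X)=0$ for $k\le 2$, and the long exact sequence of the pair makes the restriction $H^2(\widehat M)\to H^2(X)$ injective. Hence $\mathrm{PD}[X]=0$ and $\alpha_0$ exists. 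To upgrade $\alpha_0$ to a class $\alpha$ with $\alpha|_X=0$ while keeping $\alpha(m)=1$, I would subtract from $\alpha_0$ the restriction to $U$ of a suitable element of $H^1(\widehat M)$; such an element can be chosen because $H^1(\widehat M)\to H^1(X)$ is surjective by the same $2$-connectedness, and the subtraction preserves $\alpha(m)$ because $m$ is nullhomotopic in $\widehat M$. Assembling the two components then delivers the desired splitting homomorphism.

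The main obstacle is the cohomological vanishing of the Poincar\'e dual of $X$ in $H^2(\widehat M)$, for which all three hypotheses must cooperate: (i) and (ii) make $X\hookrightarrow \widehat M$ into a $2$-equivalence and hence transfer cohomological information between $X$ and $\widehat M$, while (iii) provides the vanishing on $X$.
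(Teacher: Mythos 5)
Your argument is correct and reconstructs essentially the proof given in Hanke--Pape--Schick, to which the paper defers for this lemma: the splitting is assembled from the inclusion-induced map to $\pi_1(\widehat M)=\pi_1(X)$ together with a class in $H^1$ detecting the meridian, whose existence reduces via Mayer--Vietoris to the vanishing of the dual class of $X$ in $H^2(\widehat M;\mathbb Z)$, which in turn follows because the $2$-connectivity of $X\hookrightarrow\widehat M$ (forced by (i) and (ii)) makes $H^2(\widehat M)\to H^2(X)$ injective while (iii) kills the normal Euler class. The only step worth spelling out more carefully is the relative Hurewicz application: since $\pi_1(X)$ need not be trivial, one should first pass to universal covers (using that $\pi_1(X)\to\pi_1(\widehat M)$ is an isomorphism) and then descend, or invoke the local-coefficient version of Hurewicz, to obtain $H_i(\widehat M,X;\mathbb Z)=0$ for $i\le 2$; with that noted, the rest of the argument goes through exactly as you describe.
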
 	

Glue two copies of $\widehat M\setminus  \mathring{D}(X)$ along $X\times S^1$ so as to  form the space 
\[ 
	W =  \widehat  M\setminus  \mathring{D}(X)\cup_{X\times S^1} \widehat  M\setminus  \mathring{D}(X). 
\] 
By applying the  Seifert-van Kampen theorem, we obtain the following lemma, as
in the proof of \cite[Theorem 4.3]{HankePapeSchick15}. 
\begin{lemma} 
\label{lem-splitting2}
The map 
\[
\pi_1(X\times S^1) \longrightarrow  \pi_1(W)
\]
that is  induced from the inclusion $X\times S^1\hookrightarrow W$  is a split injection. \qed
 \end{lemma}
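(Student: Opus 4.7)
The plan is to apply Seifert--van Kampen directly and then transport the splitting from the previous lemma across the pushout. Let $A = \widehat{M}\setminus \mathring{D}(X)$, so that $W = A\cup_{X\times S^1} A$. Choosing suitable open neighborhoods of the two copies of $A$ in $W$ that collapse onto them and whose intersection collapses onto a neighborhood of $X\times S^1$, the Seifert--van Kampen theorem gives an isomorphism
\[
\pi_1(W)\;\cong\;\pi_1(A)*_{\pi_1(X\times S^1)}\pi_1(A),
\]
with both amalgamating maps equal to the inclusion-induced map $i_*\colon \pi_1(X\times S^1)\to \pi_1(A)$ from the preceding lemma.

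By that preceding lemma, $i_*$ is a split injection: fix a retraction $r\colon\pi_1(A)\to\pi_1(X\times S^1)$ with $r\circ i_*=\mathrm{id}$. I would then use the universal property of the amalgamated free product to define a homomorphism
\[
\rho\colon \pi_1(A)*_{\pi_1(X\times S^1)}\pi_1(A)\longrightarrow \pi_1(X\times S^1)
\]
by sending \emph{both} copies of $\pi_1(A)$ to $\pi_1(X\times S^1)$ via $r$. The two resulting maps $\pi_1(A)\to\pi_1(X\times S^1)$ trivially agree after precomposition with $i_*$ (both compositions equal the identity), so $\rho$ is well defined on the pushout. Composing with the isomorphism from Seifert--van Kampen yields a homomorphism $\pi_1(W)\to \pi_1(X\times S^1)$ whose composition with the inclusion $\pi_1(X\times S^1)\hookrightarrow \pi_1(W)$ is the identity. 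This exhibits the desired splitting, and in particular shows that the inclusion is injective.

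There is no real obstacle here beyond bookkeeping; the only point that needs mild care is checking that the two maps in the pushout are indeed both equal to the inclusion $i_*$ (which is automatic from the symmetric way the two copies of $A$ are glued along $X\times S^1$) and that $W$ and the relevant open cover satisfy the hypotheses of Seifert--van Kampen. Once that is set up, the splitting is a purely formal consequence of the universal property of amalgamated free products applied to the retraction $r$ supplied by the previous lemma.
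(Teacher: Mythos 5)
Your proof is correct and follows the same route the paper indicates: apply Seifert--van Kampen to the symmetric gluing $W = A\cup_{X\times S^1}A$ to identify $\pi_1(W)$ with $\pi_1(A)*_{\pi_1(X\times S^1)}\pi_1(A)$, then use the retraction from the preceding lemma on both factors via the universal property of the amalgamated free product. This is precisely the argument the paper alludes to (citing Hanke--Pape--Schick), just written out in detail.
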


Pull back the covering 
\[
p: \widehat M \longrightarrow M
\] to $N$ along the map $h$, as indicated in the following diagram:
\[
\xymatrix{
\widehat N \ar[r]^{\widehat h} \ar[d] & \widehat M \ar[d]^p \\ 
N \ar[r]_h & M .
}
\]
We denote by
\[
Y = \widehat h^{-1}(X) \subset \widehat  N, 
\]
the inverse image under $\widehat h$ of our chosen    copy of $X$ in $\widehat  M$. 
\begin{lemma}
\label{lem-bdy-Y}
 The normal bundle of	$Y$ in $N$, or equivalently in  $\widehat  N$,  is  trivial. 
\end{lemma}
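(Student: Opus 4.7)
The plan is straightforward: exploit the fact that under transversality, normal bundles pull back, together with assumption (iii) that the normal bundle of $X$ in $M$ is trivializable.

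First I would observe that the ``equivalently'' clause is essentially free. The covering projection $\widehat N \to N$ is a local diffeomorphism, so its derivative identifies the normal bundle of any submanifold of $\widehat N$ with the normal bundle (in $N$) of the image submanifold. Thus it suffices to prove that the normal bundle of $Y = \widehat h^{-1}(X) $ in $\widehat N$ is trivial.

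Next I would use transversality. Since $h\colon N\to M$ is transverse to $X\subseteq M$ and $\widehat h\colon \widehat N\to\widehat M$ is a lift of $h$ along local diffeomorphisms, $\widehat h$ is transverse to our chosen copy of $X$ in $\widehat M$. The standard description of the normal bundle of a transverse preimage then gives a canonical isomorphism
\[
\nu_{Y \subseteq \widehat N} \;\cong\; (\widehat h|_Y)^* \, \nu_{X\subseteq \widehat M}.
\]
By the preceding lemma, the covering projection $p\colon\widehat M\to M$ restricts to a homeomorphism (in fact a diffeomorphism, since $p$ is a local diffeomorphism) from our chosen copy of $X$ onto $X\subseteq M$, and this diffeomorphism identifies $\nu_{X\subseteq \widehat M}$ with $\nu_{X\subseteq M}$.

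Finally, I would invoke assumption (iii): $\nu_{X\subseteq M}$ is trivializable, hence so is $\nu_{X\subseteq\widehat M}$, and pulling back along $\widehat h|_Y$ shows that $\nu_{Y\subseteq\widehat N}$ is trivializable as well. There is no real obstacle here; the only thing to watch is the bookkeeping between $X\subseteq M$, its chosen lift in $\widehat M$, and the identification of their normal bundles through the covering map, but all of these identifications are canonical.
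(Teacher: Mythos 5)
Your proof is correct and takes essentially the same route as the paper's, which simply observes that by transversality the normal bundle of $Y$ is the pullback of the normal bundle of $X$ and hence trivial by assumption (iii). You have only spelled out in more detail the (routine) identifications of normal bundles across the covering projections $\widehat N\to N$ and $\widehat M\to M$, which the paper leaves implicit.
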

\begin{proof}
By transversality,  the normal bundle of $Y$ in $N$ is the pullback of
  the normal bundle of $X$ in $M$, and hence is trivial by  our assumption (iii).
\end{proof}

Now let us  prove Theorem~\ref{thm-main-theorem}, assuming Theorem~\ref{thm-eventual-homotopy-invariance} on the eventual homotopy invariance of the controlled signature (the rest of the paper will be devoted to proving Theorem~\ref{thm-eventual-homotopy-invariance}). 

 \begin{proof}[Proof of Theorem $\ref{thm-main-theorem}$]
Form the space 
\[ 
Z= \widehat  N\setminus \mathring{D}(Y)\,\, \cup_{Y\times S^1} \,\, \widehat  N\setminus  \mathring{D}(Y).   
\]
Here $\mathring{D}(Y)$ is the inverse image of $\mathring{D}(X)$ under $\widehat h$, which is an open disk bundle of $Y$ in $\widehat  N$, with boundary $ Y\times S^1 $ thanks to Lemma~\ref{lem-bdy-Y}.

The map $\widehat  h: \widehat  N \to \widehat M$  induces a canonical continuous map 
\begin{equation}
\label{eq-h-hat}
 \varphi\colon  Z \longrightarrow  W.
 \end{equation}
 
Although it was constructed using a process that started from a homotopy
equivalence, the map $\varphi$  in \eqref{eq-h-hat} is not a homotopy
equivalence in general. However, we still get an eventual homotopy
  equivalence.

\begin{lemma}
 The map  $\varphi$ is a boundedly controlled eventual homotopy equivalence
    that is continuously controlled over $\mathbb{R}$, with respect to the
    control  maps
  \[ 
  c_W\colon W \longrightarrow  \mathbb R \quad\text{and}\quad   c_Z\colon Z\longrightarrow 
      \mathbb{R} ,
      \]
  where
    \[  {c_W}(w) = \begin{cases}
        d(w, X\times S^1) & \textup{ if $w$ is in the first copy $M\setminus  \mathring{D}(X)$ in $W$, } \\
        -d(w, X\times S^1) & \textup{ if $w$ is in the second copy
          $M\setminus \mathring{D}(X)$ in $W$, }
      \end{cases}\]  
      and where $c_Z$ is constructed correspondingly. Here
    $d(w, X\times S^1)$ is the distance between $w$ and $ X\times S^1$ in $W$.
  \end{lemma}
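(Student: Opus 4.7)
My plan is to build $\varphi$ and an eventual homotopy inverse $\psi\colon W \morphism Z$ directly from $h$ and from a smooth homotopy inverse $g\colon M\to N$, using transversality to set up compatible tubular neighborhoods. By the transversality of $h$ to $X$, together with assumption (iii), I can choose the closed tubular neighborhood $D(X)\subset\widehat M$ of the base-point copy of $X$ first, and then set $D(Y) = \widehat h^{-1}(D(X))$. Transversality and Lemma~\ref{lem-bdy-Y} ensure that $D(Y)$ is a closed tubular neighborhood of $Y$ in $\widehat N$ with boundary diffeomorphic to $Y\times S^{1}$, and $\widehat h$ restricts to a smooth proper map $\widehat N \setminus \mathring D(Y)\to \widehat M \setminus \mathring D(X)$ compatible with the product structure on boundaries. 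Gluing two copies along these boundaries yields $\varphi\colon Z\to W$.

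The two ``easy'' properties now come almost for free. For continuous control over $\mathbb{R}$: by construction $\varphi$ sends each half of $Z$ (a copy of $\widehat N\setminus \mathring D(Y)$) to the corresponding half of $W$, so $\varphi^{-1} c_W^{-1}[0,\infty) = c_Z^{-1}[0,\infty)$ and the symmetric difference is empty. For the bounded derivative condition: $h$ is a smooth map between the compact Riemannian manifolds $N$ and $M$, hence has uniformly bounded derivatives; its lift $\widehat h$ to the (bounded-geometry) covers has the same local structure and therefore also has bounded derivatives, and restricting to the complements of the tubular neighborhoods and gluing preserves this.

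The substantive part is producing the eventual homotopy inverse. Let $g\colon M\to N$ be a smooth homotopy inverse of $h$ and, after a small homotopy, take it to be transverse to $X$. Its lift $\widehat g\colon \widehat M\to \widehat N$ need not carry $D(X)$ into $D(Y)$ globally, but outside a sufficiently large compact subset it does: since $g\circ h$ and $h\circ g$ are homotopic to identity maps of compact manifolds, their lifts displace points by a uniformly bounded amount, and therefore eventually preserve the partitions of $W$ and $Z$ into positive and negative halves determined by $c_W$ and $c_Z$. Restricting $\widehat g$ to the complement of such a compact set and gluing on each side gives an eventual morphism $\psi\colon W \morphism Z$, and the smooth homotopies $g\circ h \simeq \mathrm{id}_N$ and $h\circ g \simeq \mathrm{id}_M$ lift to $\widehat N$ and $\widehat M$, where (by the same uniform bounded-displacement argument) they restrict and descend to eventual homotopies $\psi\circ \varphi\simeq \mathrm{id}_Z$ and $\varphi\circ \psi\simeq \mathrm{id}_W$ with bounded derivatives.

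The main obstacle is precisely the last point: verifying that the lifted homotopies, once restricted and glued, are continuously controlled over $\mathbb{R}$ in the eventual sense, i.e.\ that a homotopy trajectory starting in the positive half of $Z$ (or $W$) remains in the positive half for as long as it is defined, once its starting point is sufficiently far from $X\times S^{1}$. This is where assumption (i), $\pi_1(X)\to \pi_1(M)$ injective, plays its role: it forces the base-point copy of $X$ in $\widehat M$ to be a separating, isolated lift, so that points with $|c_W|$ large are genuinely far (in $\widehat M$) from $X$, and the uniform displacement bound of the lifted homotopy guarantees that such points cannot cross $X\times S^{1}$. This is precisely the coarse mechanism underlying Roe's partitioned manifold index theorem and Theorem~\ref{thm-bounded-homotopy-invariance}.
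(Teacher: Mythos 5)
Your proposal follows the same route as the paper: lift $h$, its homotopy inverse $g$, and the homotopies to $\widehat M$, $\widehat N$; observe that only $\widehat h$ gives an honest map $Z\to W$ (since $\mathring D(Y)=\widehat h^{-1}(\mathring D(X))$ by construction); and argue that the remaining lifts, because they displace points a uniformly bounded amount while $D(X)$ and $D(Y)$ are compact, restrict off a compact set to give the required eventual maps, with bounded geometry and continuous control over $\R$ inherited from the compact base.

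One correction: your invocation of assumption (i) in the final paragraph is misplaced. That assumption is used earlier, in the first lemma of the section, to guarantee that the base-point component of $p^{-1}(X)$ in $\widehat M$ is a \emph{compact} copy of $X$ (so that $D(X)$, and hence $D(Y)$, is compact). Once that is secured, the control argument for the lifted homotopies has nothing to do with $\pi_1$-injectivity: it is pure metric geometry --- the lifted homotopies move points a uniformly bounded distance, and a point at distance $>R$ from $X\times S^1$ cannot be pushed across $X\times S^1$ if $R$ exceeds the displacement bound. Moreover, the lifted $X$ is typically \emph{not} separating (and need not be ``isolated'') in $\widehat M$; that is exactly why the paper glues two copies of $\widehat M\setminus\mathring D(X)$ --- the doubling is what manufactures the separating hypersurface $X\times S^1$ in $W$. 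Also a small point: you don't need $g$ to be transverse to $X$; no preimage of $X$ under $g$ is ever formed, so that perturbation is superfluous.
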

  
 \begin{proof}
 The map $h$, its homotopy inverse $g\colon M\to N$, and the homotopies
  $H_1\colon N\times [0,1]\to M$ and  $H_2\colon M\times [0,1]\to N$ between the
  compositions and the identity maps all lift to maps $\hat g,\hat h,\hat
  H_1,\hat H_2$ on $\hat M$, $\hat N$. In general, only $\hat h$ will map
  $\mathring{D}(Y)\subset \hat N$  to $\mathring{D}(X)\subset \hat M$ and
  therefore give rise to an honest map $\phi\colon Z\to W$. But the other maps
  at least give rise to eventual maps which establish the required eventual
  homotopy equivalence.
 
 We lift the Riemannian metrics from the compact manifolds $N,M$ to $\hat N,\hat
  M$ and obtain metrics on $W$ and $Z$ by smoothing these metrics in a
  compact neighborhood of $X\times S^1$ or $Y\times S^1$,
  respectively. Therefore, we obtain metrics of bounded geometry. All our maps
  are lifted from the compact manifolds $M,N$ and therefore are boundedly
  controlled. By construction, they also preserve the decomposition into
  positive and negative parts according to the control maps $c_W,c_Z$ (up to a
  compact deviation).  That is, they are continuously controlled over $\mathbb{R}$.
  \end{proof}

Let $r\colon \pi_1(W) \to \pi_1(X\times S_1)$ be the splitting of the inclusion homomorphism $\iota \colon \pi_1(X\times S^1) \to \pi_1 (W)$. Let $\widetilde W$ be the covering space of $W$ corresponding to the subgroup $\ker(r)$, that is,  the quotient space of the universal cover of $W$ by $\ker(r)$.   Let $\widetilde Z$ be the covering space of $Z$ that is the pullback of $\widetilde W \to W$ along the map $\varphi$.    It follows from  Theorem $\ref{thm-eventual-homotopy-invariance}$ that 
	\[ 2^\varepsilon\left( \sig_{c,\pi}(\widetilde W/W) - \sig_{c,\pi }(\widetilde Z/Z) \right) = 0 \quad  \text{in $K_{d}(C^*_r(\pi))$.}  \]
	where $\pi = \pi_1(X\times S^1) =  \pi_1(X) \times \mathbb Z$ and
	\[
	\varepsilon = 
	\begin{cases}  
	0 & \text{if $d $ is even} \\
	1  & \text{if $d$ is odd.} 
	\end{cases}
	\] 
Equivalently, we have 
\[ 2^\varepsilon \left( \sig_\pi(\widetilde{X\times S^1}/X\times S^1)  - \sig_\pi (\widetilde{Y\times S^1}/Y\times S^1) \right) = 0 \quad  \text{in $K_{d}(C^*_r(\pi))$.}  \]
Here $\widetilde{X\times S^1}$ is the universal cover of $X\times S^1$, which
pulls back to give a covering space  $\widetilde{Y\times S^1}$ over $Y\times
S^1$. It is a basic computation of higher signatures that
  $\sig_{\mathbb{Z}}(S^1)$ is a generator of $K_1(C^*_r(\mathbb{Z}))\cong
  \mathbb{Z}$. Moreover, we have for an arbitrary group $\Gamma$ the K\"unneth
  isomorphism given by the product map
  \begin{equation*}
    K_p(C^*_r(\Gamma))\otimes K_0(C^*_r(\mathbb{Z})) \oplus
    K_{p-1}(C^*_r(\Gamma))\otimes K_1(C^*_r(\mathbb{Z})) \xrightarrow{\cong}
    K_p(C^*_r(\Gamma\times \mathbb{Z})).
  \end{equation*}
The theorem now follows from the product formula of signature operators (cf. Section $\ref{product}$).   
\end{proof}

We conclude this section with a discussion of    counterexamples to some would-be strengthenings of the theorem, as well as  some further comments.

Techniques from surgery theory, and in particular Wall's $\pi$-$\pi$ theorem  \cite[Theorem 3.3]{Wall99},  show that conditions (i) and (ii) in Theorem \ref{thm-main-theorem} are   both necessary.  
 Concerning condition (i), let us start by noting that the $n$-torus $T^n$
 embeds into the $(n{+}2)$-sphere $S^{n+2}$    with trivial normal bundle
 (indeed it maps into every $(n{+}2)$-dimensional manifold with trivial normal
 bundle, but one embedding into the sphere will suffice).  Let  $f: V\to W$ be
 a  degree-one normal map between simply connected, closed, oriented
 $4k$-manifolds with distinct signatures $\sig(V)\ne \sig(W)\in\mathbb{Z}$.
 Consider the degree-one normal map of pairs
 \[
{f{\times} \mathrm{Id}} \colon  \bigl  (V {\times} B^{n+3}, V{\times} S^{n+2}\bigr )    \longrightarrow 
 \bigl  (W {\times} B^{n+3}, W{\times} S^{n+2} \bigr ) .
 \]
 According to Wall's $\pi$-$\pi$ theorem       this is normally cobordant to a homotopy equivalence of pairs; let us write the homotopy equivalence on boundaries as 
 \[
 h   \colon   N    \xrightarrow{\simeq}    W{\times} S^{n+2} .
 \]
 Consider the codimension-two submanifold (with trivial normal bundle)
\[
 W {\times} T^n \subseteq W{\times} S^{n+2}   .
 \]
Its signature   class in the free abelian group $K(C^*_r (\mathbb Z^n))$ is
$\sig(W)\cdot \sig_{\mathbb{Z}^n}(T^n)$ where $\sig_{\mathbb{Z}^n}(T^n)$ has
infinite order.  However the  signature class of the
transverse inverse image $M=h^{-1}(W\times T^n)$  is equal to
\[\sig_{\mathbb Z^n}(M)=\sig(V)\cdot\sig_{\mathbb{Z}^n}(T^n)\in
  K(C^*_r(\mathbb Z^n)).\]
This follows from 
the cobordism invariance of the signature class, using that the normal bordism
between $f\times\mathrm{id}_{S^{n+2}}$ and $h$ restricts to a normal bordism
between $f\times\mathrm{id}_{T^n}$ and $h|_M$.  Therefore (even
twice) the signature classes of the two submanifolds of codimension $2$ are
distinct.

As for  condition (ii),  start with  any   degree-one normal map $f\colon
      W \to X$ such that   
      \[
      2 \left(\sig_{\pi_1(X)}(\widetilde W / W) -  \sig_{\pi_1(X)}(\widetilde X/ X) \right) \ne  0 
      \]
       (here $\widetilde X$ is the universal cover of $X$, and $\widetilde W$
       is the pullback to $W$ along $f$). By Wall's theorem again, the map 
    \[
    f\times \id \colon W{\times} S^2\longrightarrow  X{\times} S^2
    \]
     is   normally cobordant to an orientation-preserving  homotopy
     equivalence
     \[ 
     h \colon N \stackrel \simeq\longrightarrow  X{\times} S^2.
     \]
    Let $Y$ be the transverse inverse image of the codimension-two submanifold $X{\times}\{\mathrm{pt}\}\subseteq X{\times} S^2$ under the map $h$. It follows from cobordism invariance of the signature class that 
\[
\sig_{\pi_1(X)}(\widetilde Y/Y )=\sig_{\pi_1(X)}(\widetilde W/W) ,
\]
and therefore that 
      \[ 2\left(\sig_{\pi_1(X)}(\widetilde Y/Y)- \sig_{\pi_1(X)}(\widetilde
        X/X)\right)\ne 0,  \] 
as required.

      On the other hand, it seems to be difficult to determine whether or not 
      condition (iii)   is necessary. This   question is also
      open for the companion result about positive scalar curvature; compare
      in particular \cite{MGBL83}.

Finally, as we mentioned in the introduction, we do not know if the factor of $2$ that appears in Theorem~\ref{thm-main-theorem} is really necessary, although we suspect it is.  
      Let $p\colon M \to  \Sigma$ be any bundle of oriented closed manifolds over an
      oriented closed surface $\Sigma\ne S^2$ with fiber $X = p^{-1}(\text{\rm pt})$. If $h\colon N\to M$ is a homotopy 
      equivalence, then, whether or not the composition 
      \[
 N\stackrel h \longrightarrow M \stackrel p\longrightarrow 
      \Sigma
      \]
       is homotopic to a fiber bundle projection, and whether or not   
      the induced map from the ``fiber'' 
      $Y=(p\circ h)^{-1}(\text{\rm pt})$ to $X$
      is cobordant to a homotopy equivalence, Theorem \ref{thm-main-theorem} guarantees that the
      $C^*$-algebraic higher signatures of $Y$ and $X$ coincide in
      $K_*(C^*_r(\pi_1(X)))$, after multiplication with $2$.  In the related $L$-theory context, examples can be constructed where the factor of $2$ is really necessary, and perhaps the same is true here.  Moreover it seems possible that many nontrivial examples of the theorem can be obtained from this construction.

\section{Signature Classes in C*-Algebra K-Theory}
\label{sec-controlled-signature}

In this section we shall review the construction of the  $C^*$-algebraic signature class. 
One route towards the definition of the signature class goes via index  theory and the signature operator.  See for example \cite[Section 7]{BCH94} for an introduction.   Although we shall make use of the index theory approach, we shall mostly  follow a different route, adapted   from \cite{HigsonRoeSurgToAnal1,HigsonRoeSurgToAnal2}, which makes it easier to handle the invariance properties of the signature class that we need.

\subsection{Hilbert-Poincar\'{e} complexes}\label{sec:hp}
Hilbert-Poincar\'{e} complexes were introduced in  \cite{HigsonRoeSurgToAnal1} to adapt the standard symmetric signature constructions in $L$-theory to the context of $C^*$-algebra $K$-theory.  We shall review the main definitions here; see  \cite{HigsonRoeSurgToAnal1}  for more details.

\begin{definition}\label{def:hilpoin}
Let $A$ be a unital $C^\ast$-algebra. 
	An \emph{$d$-dimensional Hilbert-Poincar\'{e} complex}  over $A$  is a complex of finitely generated (and therefore also projective) Hilbert $A$-modules and bounded, adjointable differentials,
	\[  E_0 \xleftarrow{b} E_{1} \xleftarrow{b} \cdots \xleftarrow{b} E_{d}, \]
	together with bounded adjointable operators $D: E_p \to E_{d-p}$ such that 
	\begin{enumerate}[(i)]
		\item if $v\in E_{p}$, then $D^\ast v = (-1)^{(d-p)p} Dv$;
		\item if $v\in E_{p}$, then $Db^\ast v + (-1)^{p} bDv = 0$; and
		\item $D$ is a homology isomorphism  from  the dual complex 
		\[  E_d \xleftarrow{b^\ast} E_{d-1} \xleftarrow{b^\ast} \cdots \xleftarrow{b^\ast} E_{0} \]
		to the complex $(E, b)$. 
	\end{enumerate}
\end{definition}

To each  $d$-dimensional Hilbert-Poincar\'{e} complex over $A$ there is associated a  \emph{signature} in   $C^*$-algebra $K$-theory:
\[
\sig (E, D)\in K_d(A).
\]
See \cite[Section 3]{HigsonRoeSurgToAnal1} for the construction. If $A$ is the $C^*$-algebra of complex numbers, and $d\equiv 0 \mod 4$, then the signature identifies with the signature of the  quadratic form induced from $D$ on middle-dimensional homology.

 The signature is a homotopy invariant   \cite[Section 4]{HigsonRoeSurgToAnal1} and indeed a bordism invariant  \cite[Section 7]{HigsonRoeSurgToAnal1}.   These concepts will be illustrated in the following   subsections.

\subsection{The  Signature Class }
\label{sec-signature-def}

Let $\Sigma$ be a finite-dimensional simplicial complex and denote by $\C [
\Sigma_p] $ the vector space of finitely-supported, complex-valued functions
on the set of $p$-simplices in $\Sigma$.  After assigning orientations to
simplices  we obtain in the usual way a  {chain} complex
\begin{equation}
\label{eq-homology-complex}
\C[\Sigma_0] \xleftarrow{b_1} \C [ \Sigma_1] \xleftarrow{b_{2}} \cdots .
\end{equation}
If $\Sigma$ is, in addition, locally finite (meaning that each vertex is contained in only finitely many simplices) then there is also an adjoint complex
\begin{equation}
\label{eq-cohomology-complex}
\C[\Sigma_0] \xrightarrow{b_1^*} \C [ \Sigma_1] \xrightarrow{b_{2}^*} \cdots ,
\end{equation}
in which the  differentials are adjoint to those of \eqref{eq-homology-complex} with respect to the standard inner product for which the delta functions on simplices are orthonormal.

If $\Sigma$ is furthermore an oriented combinatorial manifold of dimension $d$ with fundamental cycle $C$, then there is a duality operator 
\begin{equation}
\label{eq-duality-operation}
D: \C[\Sigma_p]  \longrightarrow \C[\Sigma_{d-p}]
\end{equation}
defined by the usual formula
$D(\xi) = \xi\cap C $
(compare \cite[Section 3]{HigsonRoeSurgToAnal2}, where the duality operator is called $\mathbb P$). There is also an adjoint operator 
\begin{equation*}
D^*: \C[\Sigma_{d-p}]  \longrightarrow \C[\Sigma_{p}] .
\end{equation*}
We are, as yet, in a purely algebraic, and not $C^*$-algebraic, context,  but the operator $D$ satisfies all the   relations   given in Definition~\ref{def:hilpoin}, except for the first.  As  for the  first, the operators $D$ and $(-1)^{(d-p)p} D^*$ are chain homotopic, and   if we replace $D$ by the average
\[
\tfrac 12 ( D+ (-1)^{(d-p)p} D^*),
\]
 which we shall do from now on, then  all the relations are satisfied.

 In order to pass from complexes of vector spaces to complexes of Hilbert modules we shall assume the following:

\begin{definition}\label{def:uniformbd}
	A simplicial complex $\Sigma$ is of  \emph{bounded geometry} if there is a positive integer $k$ such that any   vertex of $\Sigma$ lies in at most $k$ different simplices of $\Sigma$.
\end{definition}

\begin{definition}
\label{def-proper-control}
Let $P$ be a proper metric space and let $S $ be   a set. A function 
\[
 c  \colon S \longrightarrow    P
 \]
is a \emph{proper control map} if for every $r>0$, there is a bound $N< \infty$ such that if $B\subseteq P$ is any subset of diameter less than $r$, then $c^{-1}[B]$  has  cardinality less than $N$. 
\end{definition}

 \begin{definition}
 \label{def-C-of-Z-and-W}
 Let $P$ be a proper metric space, let $S$ and $T$ be   sets, and let 
\[
c_S \colon S  \longrightarrow    P
\quad \text{and} \quad 
 c_T \colon T \longrightarrow    P
 \]
be proper control maps.  Suppose in addition that a discrete group $\pi$ acts on  $P$ properly through isometries, and on  the sets $S$ and $T$, and suppose that   the control maps  are equivariant. 
	A linear map 
\[
	A\colon \C[S] \longrightarrow  \C[T]
\] 
is \emph{boundedly geometrically controlled} over $P$ if 
	\begin{enumerate}[(i)]
		\item the matrix coefficients of $A $ with respect to the  bases  given by the points of $S$ and $T$ are uniformly bounded; and 
		\item   there is a constant $K>0$ such that the  $(t,s)$-matrix coefficient of $A$ is zero whenever $d(c(t), c(s))>K$. 
			\end{enumerate}
We shall denote by 
$
\C_{P,\pi} [T,S]
$
 the linear space   of  equivariant, boundedly geometrically controlled linear operators from $\C[S]$ to $ \C[T]$.	 \end{definition}

Every  boundedly geometrically controlled linear operator is adjointable, and the  space  $\C_{P,\pi}[S,S]$ is a $*$-algebra by composition and adjoint of operators.

\begin{definition} 
\label{def-red-completion}
We shall denote by $C^* _{P,\pi}(S,S)$   the $C^*$-algebra completion of $\C_{P,\pi}[S,S]$ in the operator norm on $\ell ^2 (S)$.
\end{definition}

\begin{remark} 
\label{rem-max-completion}
The norm above is related to the norm in   \emph{reduced} group $C^*$-algebras.  There is also a norm on $\C_{P,\pi}[S,S]$  appropriate to full group $C^*$-algebras, namely
\[
\| A\| = \sup _\rho \| \rho (A) \colon H \to H\| ,
\]
where the supremum is over all  representations of the $*$-algebra $\C_{P,\pi} [S,S ]$ as bounded operators on Hilbert space.  {See} \cite{GongWangYu08}, where among many other things it is shown that the supremum above is finite. 
\end{remark}

Now the space  $\C_{P,\pi} [T,S]$ in Definition~\ref{def-C-of-Z-and-W} is a right    module over $\C_{P,\pi}[S,S]$ by composition of operators.

\begin{lemma}
\label{lemma-f-g-projective}
If there is an equivariant, injective  map $T \to S$ making the diagram 
\[
\xymatrix{
T  \ar[r]\ar[d]_{c_T} & S \ar[d]^{c_S} \\
P \ar@{=}[r] &P
}
\]
commute, 
then $\C_{P,\pi} [T,S]$ is finitely generated and projective over ${\C}_{P,\pi}[S,S]$. \qed
\end{lemma}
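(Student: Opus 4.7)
The plan is to realize $\C_{P,\pi}[T,S]$ as a cornered summand $e\cdot \C_{P,\pi}[S,S]$ of the rank-one free module, with $e$ a self-adjoint idempotent built directly from the inclusion $T\hookrightarrow S$. Since the inclusion is equivariant and the diagram of control maps commutes, the image of $T$ in $S$ is a $\pi$-invariant subset, and we may simply identify $T$ with this subset. Define
\[
e \colon \C[S] \longrightarrow \C[S]
\]
to be the diagonal operator whose matrix entries are $1$ on the points of $T$ and $0$ elsewhere. The first step is to check that $e$ lies in $\C_{P,\pi}[S,S]$ and is a self-adjoint idempotent: the matrix entries take values in $\{0,1\}$, the support is contained in the diagonal (so the propagation constant $K$ may be taken to be $0$), and $\pi$-equivariance follows from the fact that $T\subseteq S$ is $\pi$-invariant.

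Next I would introduce the inclusion $\iota\colon \C[T]\hookrightarrow \C[S]$ and its adjoint $\iota^{\ast}\colon\C[S]\to\C[T]$ (extension-by-zero and restriction, respectively). Both have matrix entries in $\{0,1\}$ supported on pairs $(s,t)$ with $s=\iota(t)$, and because the two control maps agree under the inclusion, these pairs map to a single point of $P$; hence $\iota$ and $\iota^{\ast}$ are boundedly geometrically controlled of propagation zero, and are equivariant. Note also $\iota\iota^{\ast}=e$ and $\iota^{\ast}\iota=\mathrm{Id}_{\C[T]}$.

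The key identification is then the pair of mutually inverse right $\C_{P,\pi}[S,S]$-module maps
\[
\Phi \colon \C_{P,\pi}[T,S] \longrightarrow e\cdot \C_{P,\pi}[S,S],\quad A\longmapsto \iota\circ A,
\]
and
\[
\Psi \colon e\cdot \C_{P,\pi}[S,S] \longrightarrow \C_{P,\pi}[T,S],\quad B\longmapsto \iota^{\ast}\circ B.
\]
Routine verifications show that $\Phi$ and $\Psi$ land in the asserted spaces (bounded control and equivariance of compositions follow from the previous paragraph, together with the fact that $\C_{P,\pi}[{-},{-}]$ is closed under composition with controlled operators), that $\Psi\Phi=\mathrm{Id}$ because $\iota^{\ast}\iota=\mathrm{Id}$, and that $\Phi\Psi=\mathrm{Id}$ because any $B=eB$ already takes values in $\iota(\C[T])$. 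Right $\C_{P,\pi}[S,S]$-linearity is immediate from associativity of composition.

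Since $e\in\C_{P,\pi}[S,S]$ is an idempotent, $e\cdot\C_{P,\pi}[S,S]$ is a direct summand of the free rank-one module $\C_{P,\pi}[S,S]$, complementary to $(1-e)\cdot\C_{P,\pi}[S,S]$, and so it is finitely generated (by the single element $e$) and projective. Transporting via $\Psi$ gives the desired statement for $\C_{P,\pi}[T,S]$. The only potential nuisance is the bookkeeping around bounded control under composition and the identification of $T$ with its image in $S$; no actual technical obstacle arises because all the auxiliary operators $e$, $\iota$, $\iota^{\ast}$ have propagation zero.
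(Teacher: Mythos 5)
Your proof is correct and takes essentially the same approach as the paper: the paper also defines the diagonal idempotent $p$ (your $e$) that is $1$ on $T$ and $0$ on $S\setminus T$, observes it lies in $\C_{P,\pi}[S,S]$, and identifies $\C_{P,\pi}[T,S]$ with the cornered module $p\,\C_{P,\pi}[S,S]$. You spell out the identification via the inclusion $\iota$ and its adjoint $\iota^*$, which the paper leaves implicit, but the idea and structure of the argument are identical.
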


\begin{proof}
  Define $p\colon \C[S]\to \C[S]$ sending $t\in T$ to $t$ and $s\in S\setminus
  T$ to $0$. Then $p\in \C_{P,\pi}[S,S]$ is an idempotent with image precisely
  $\C_{P,\pi}[T,S]$. So the image is projective and generated by the 
  element $p$.
\end{proof}

Assume that the hypothesis in the lemma holds (we shall always assume this in what follows).  The induced module 
\[
C^*_{P,\pi}(T,S) = \C_{P,\pi} [T,S] \otimes _{\C_{P,\pi}[S,S]} C^*_{P,\pi}(S,S)
\]
is then a finitely generated and projective module over $C^*_{P,\pi}(S,S)$.  It is isomorphic to the completion of $ \C_{P,\pi} [T,S] $ in the norm associated to the   $\C_{P,\pi}[S,S]$-valued, and hence $C^*_{P,\pi}(S,S)$-valued,   inner product  on $\C_{P,\pi}[ {T,S}]$ defined by 
\[
\langle A,B\rangle = A^*B .
\]
 So it is a finitely generated and projective Hilbert module over $C^*_{P,\pi}(S,S)$.

Let us   return to the bounded geometry, combinatorial manifold $\Sigma$, which we shall now assume is equipped with an action of $\pi$.  Assume that the sets $\Sigma_p$ of $p$-simplices admit proper control maps to some $P$ so that there is a uniform bound on the distance between the image of any simplex and the image of any of its vertices.   Then the differentials in \eqref{eq-homology-complex} and the duality operator \eqref{eq-duality-operation} are boundedly geometrically controlled, and   we obtain from them a complex
\begin{equation}
   C^*_{P,\pi}(  \Sigma_0,S) \xleftarrow{b_1}  C^*_{P,\pi}( \Sigma_1,S) \xleftarrow{b_{2}} \cdots \xleftarrow{b_{d}}  C^*_{P,\pi}(\Sigma_d,S),
\end{equation}
of finitely generated and projective Hilbert modules over $C^*_{P,\pi}(S,S)$ for any $S$ that satisfies the hypothesis of Lemma~\ref{lemma-f-g-projective} for all $T=\Sigma_p$. 
The procedure in Section~\ref{sec:hp} therefore gives us a signature
\begin{equation}
\label{eq-sig-in-pi-S-algebra}
\sig_{P,\pi}( \Sigma,S)\in K_d (C^*_{P,\pi}(S,S)).
\end{equation}

If  $S$ is a free $\pi$-set with  {finitely many $\pi$-orbits}, and if we regard the underlying set  of $\pi$ as a   $\pi$-space by  left multiplication, then the bimodule $\C_{P,\pi}[\pi ,S]$ is a Morita equivalence 
\[
\C_{P,\pi}[S,S]\underset{\text{Morita}}\sim \C_{P,\pi}[\pi ,\pi] ,
\]
and upon completion we obtain a $C^*$-algebra Morita equivalence 
\begin{equation}
\label{eq-Morita}
C_{P,\pi}^*(S,S)\underset{\text{Morita}}\sim C^*_{P,\pi}(\pi,\pi) .
\end{equation}
But the action of $\pi$ on the vector space  $\C[\pi]$   by \emph{right} translations  gives an isomorphism of $*$-algebras
\[
\C[\pi] \stackrel \cong \longrightarrow \C_{P,\pi}[\pi,\pi]
\]
 and then by completion an isomorphism 
\begin{equation}
\label{eq-C-star-pi-iso}
 C^*_r(\pi ) \stackrel \cong \longrightarrow C^*_{P,\pi}(\pi,\pi).
 \end{equation}
Putting \eqref{eq-Morita} and \eqref{eq-C-star-pi-iso} together we obtain a canonical isomorphism 
\begin{equation}
\label{eq-K-theory-iso}
 K_*(C^*_{P,\pi}(S,S)) \cong K_*(C^*_r(\pi)).
 \end{equation}

 Now let $X$ be a smooth, closed and oriented manifold of dimension $d$, and let 
$
\widetilde X$ be a $\pi$-principal bundle over $X$. Fix any triangulation of  $X$ and lift it to a triangulation $\Sigma$ of $\widetilde X$.  In addition, fix a Riemannian metric on $X$ and lift it to a Riemannian metric on $\widetilde X$.  Let $P$ be the underlying proper metric space.   If we choose $S$  to be the the disjoint union of all $\Sigma_p$, then
we obtain a signature as in \eqref{eq-sig-in-pi-S-algebra} above.

\begin{definition}
We denote by
\[
\sig_\pi(\widetilde X/X)\in K_d (C^*_r (\pi))
\]
the signature class associated to \eqref{eq-sig-in-pi-S-algebra} under the $K$-theory isomorphism \eqref{eq-K-theory-iso}.
\end{definition}

\begin{remark}
\label{rem-max-completion3}
By using the $C^*$-algebra norm in Remark~\ref{rem-max-completion} we would
instead obtain a    signature class in $K_d (C^*(\pi))$, where $C^*(\pi)$ is the maximal group $C^*$-algebra.
 \end{remark}
 
 The general invariance properties of the signature of a Hilbert-Poincar\'e complex imply that the signature is an oriented homotopy invariant, as in \eqref{num-homotopy-invariant} and a bordism invariant, as in \eqref{num-bordism-invariant}.  Compare \cite[Sections 3\&4]{HigsonRoeSurgToAnal2}.
 
 \subsection{Signature Classes in Coarse Geometry}
 
 Let $P$ be a proper metric space. A \emph{standard $P$-module} is a separable Hilbert space that is equipped with a nondegenerate representation of the $C ^*$-alg\-ebra $C_0(P)$.  It is unique up to finite-propagation unitary isomorphism. As a result, the $C^*$-algebra $C^*(P)$ generated by finite-propagation, locally compact operators is unique up to inner (in the multiplier $C^*$-algebra) isomorphism, and its topological $K$-theory is therefore unique up to canonical isomorphism.  For all this see for example  \cite[Section 4]{HigsonRoeYu93}. 
 
  If a discrete group $\pi$ acts properly and isometrically on $P$, then all these  definitions and constructions can be made equivariantly, and we shall denote by  $C^*_{\pi}(P)$  the $C^*$-algebra generated by the $\pi$-equivariant, finite-prop\-agation, locally compact operators. 
 
 If $S$ is a $\pi$-set that is equipped with an equivariant  proper control map to $P$, as in Definition~\ref{def-proper-control}, then $\ell^2(S)$ carries a $\pi$-covariant representation of $C_0(P)$ via the control map, and there is an isometric, equivariant,  finite-propagation inclusion of $\ell^2(S)$ into any $\pi$-covariant  standard $P$-module; see  \cite[Section 4]{HigsonRoeYu93} again.  This induces an inclusion of $C^*_{P,\pi}(S,S)$ into $C^*_{\pi}(P)$, and a canonical homomorphism
 \begin{equation}
 \label{eq-map-to-roe-algebra}
 K_*(C^*_{P,\pi}(S,S)) \longrightarrow  K_*( C^*_{\pi}(P)).
\end{equation}

\begin{remark}
\label{rem-P-mod-pi-cpt}
If the quotient space $P/\pi$ is compact, then this is an \emph{isomorphism}, and, as we noted, in this case the left-hand side of \eqref{eq-map-to-roe-algebra} is canonically  isomorphic to $K_*(C^*_r(\pi))$.   But in general the right-hand side can be quite different from either $ K_*(C^*_{P,\pi}(S,S))$ or $K_*(C^*_r(\pi))$.
\end{remark}

 \begin{definition}
 \label{def-coarse-signature}
Let $W$ be a complete, bounded geometry, oriented Riemannian manifold, and let $\widetilde W$ be a $\pi$-principal bundle over $W$.  We shall denote by 
 \[
 \sig_{\pi}(\widetilde W/W) \in K_*( C^*_{\pi}(\widetilde W))
  \]
  the signature class that is obtained from a $\pi$-invariant, bounded geometry triangulation
  $\Sigma$ of $\widetilde W$ by applying the map \eqref{eq-map-to-roe-algebra} to the class    \eqref{eq-sig-in-pi-S-algebra}.
 \end{definition}
 
 This is the same signature class  {as} the one considered in
 \cite{HigsonRoeSurgToAnal2}.  Once again, the general invariance properties
 of the signature of a Hilbert-Poincar\'e complex  immediately provide a
 homotopy invariance result:
 
\begin{theorem} 
The signature class $  \sig_{\pi}(\widetilde W/W)$ is independent of the triangulation.  In  {fact} the signature class is a boundedly controlled oriented homotopy invariant.
\end{theorem}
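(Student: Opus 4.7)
The plan is to reduce both assertions to the general homotopy invariance of signatures of Hilbert--Poincar\'e complexes established in \cite[Section~4]{HigsonRoeSurgToAnal1}, now applied in the boundedly controlled setting with $P = \widetilde W$ as the controlling space.

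First I would settle independence of the triangulation. Given two $\pi$-invariant bounded-geometry triangulations $\Sigma$ and $\Sigma'$ of $\widetilde W$, pass to a common iterated barycentric subdivision $\Sigma''$. Because the underlying continuous identity on $\widetilde W$ has propagation zero and both triangulations have uniformly bounded geometry, the subdivision and simplicial-approximation chain maps between $\Sigma, \Sigma', \Sigma''$ have uniformly bounded matrix coefficients and uniformly bounded propagation with respect to the lifted metric, so they lie in the spaces $\C_{P,\pi}[-,-]$ of Definition~\ref{def-C-of-Z-and-W}. After $C^*$-completion and application of the map \eqref{eq-map-to-roe-algebra} into $K_*(C^*_\pi(\widetilde W))$, these chain maps assemble into a homotopy equivalence of Hilbert--Poincar\'e complexes between those associated to $\Sigma$ and $\Sigma'$, hence the two signature classes coincide.

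For a boundedly controlled orientation-preserving homotopy equivalence $f\colon W\to W'$, with homotopy inverse $g$ and homotopies $H_W, H_{W'}$, all of which lift compatibly to the covers $\widetilde W, \widetilde W'$, I would choose sufficiently fine $\pi$-invariant bounded-geometry triangulations $\Sigma, \Sigma'$ and produce simplicial approximations $\hat f, \hat g, \hat H_W, \hat H_{W'}$ whose propagation is bounded in terms of the Lipschitz constants of the maps involved and the mesh of the triangulations. Because all derivatives are bounded and the triangulations have bounded geometry, the simplicial approximations lie in the appropriate $\C_{P,\pi}[-,-]$ and $\C_{P',\pi}[-,-]$ spaces and define chain maps whose composites differ from the identities by boundedly controlled chain homotopies. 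This is exactly a homotopy equivalence of Hilbert--Poincar\'e complexes in the sense of \cite[Section~4]{HigsonRoeSurgToAnal1}, now in the boundedly controlled setting, so the homotopy invariance of the Hilbert--Poincar\'e signature yields the desired equality in $K_d(C^*_\pi(\widetilde W))$, after identifying $C^*_\pi(\widetilde W) \cong C^*_\pi(\widetilde W')$ via the coarse equivalence induced by $f$.

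The main technical obstacle is carrying out these simplicial approximations uniformly: one must arrange $\hat f$, $\hat g$ and the two chain homotopies between $\hat g\circ\hat f$, $\hat f\circ\hat g$ and the respective identities with simultaneous uniform propagation and matrix-coefficient bounds, and then verify that the resulting chain maps intertwine the cap-product duality operators $D$ and $D'$ up to a boundedly controlled chain homotopy. With the bounded-geometry hypotheses on $W, W'$ and the bounded-derivative hypotheses on $f, g, H_W, H_{W'}$ this is routine but requires careful bookkeeping; once it is in place the theorem follows directly from the Hilbert--Poincar\'e complex machinery developed in \cite{HigsonRoeSurgToAnal1,HigsonRoeSurgToAnal2}.
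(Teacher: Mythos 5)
Your proposal takes essentially the same route as the paper, which simply cites the general homotopy invariance of Hilbert--Poincar\'e signatures from \cite{HigsonRoeSurgToAnal1,HigsonRoeSurgToAnal2}; indeed your sketch closely parallels the argument the paper later spells out for the eventual version (Theorem~\ref{thm:htpyinv}), where chain maps and chain homotopies induced by $f$, $g$, and the homotopies are checked to be boundedly geometrically controlled and to intertwine the cap-product dualities.

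One small imprecision: two $\pi$-invariant bounded-geometry triangulations of $\widetilde W$ need not admit a \emph{common iterated barycentric} subdivision. What exists (by the PL theory of smooth triangulations) is a common subdivision, but the cleaner and more robust route---and the one implicit in the paper's treatment of Theorem~\ref{thm:htpyinv}---is to apply the homotopy-invariance machinery directly to the identity map of $\widetilde W$ viewed as a boundedly controlled homotopy equivalence between the two triangulated models, using simplicial approximation to build the controlled chain maps $\Sigma\leftrightarrow\Sigma'$ and the controlled chain homotopies, rather than trying to identify the two complexes through a single common subdivision. With that adjustment the argument goes through as you describe.
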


\subsection{The Signature Operator}
\label{sec-sig-op}

\begin{definition}
Let  $P$ be an oriented, complete   Riemannian manifold. We denote by $D_P$ the signature operator on $P$; see for example \cite[Section 5]{HigsonRoeSurgToAnal2}.  
\end{definition}

 If $W$ is an   oriented, complete   Riemannian manifold, and if $\widetilde W$ is a $\pi$-principal bundle over $W$, then  the operator $D_{\widetilde W}$  is a   $\pi$-equivariant, first-order elliptic differential operator on $\widetilde W$, and as explained in \cite{BCH94,Roe93,Roe96} it has an equivariant analytic index 
 \[
 \operatorname{Ind}_\pi (D_{\widetilde W}) \in K_d ( C^*_\pi (\widetilde W)).
 \]
  
 \begin{theorem}[{\cite[Theorems 5.5 and 5.11]{HigsonRoeSurgToAnal2}}]
 If $W$ is  a complete  bounded geometry Riemannian manifold of dimension $d$, and if $\widetilde W$ is a $\pi$-principal bundle over $W$, then 
 \[
  \sig_{\pi}(\widetilde W/W)  =  \operatorname{Ind}_\pi (D_{\widetilde W}) \in K_d ( C^*_\pi (\widetilde W)) .
 \]
 \end{theorem}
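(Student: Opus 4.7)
The plan is to realize both sides of the claimed equality as signatures of Hilbert--Poincar\'e complexes (in the sense of Section~\ref{sec:hp}) over $C^*_\pi(\widetilde W)$, and then to exhibit a boundedly controlled chain equivalence between the two complexes that intertwines their duality operators up to chain homotopy. Once this is done, the general invariance properties of the signature of a Hilbert--Poincar\'e complex force the two $K$-theory classes to agree. This is the strategy of \cite{HigsonRoeSurgToAnal2}, and the argument reduces to producing the right comparison.

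On the combinatorial side the Hilbert--Poincar\'e complex is already in place: it is the complex of finitely generated and projective Hilbert $C^*_\pi(\widetilde W)$-modules built from the simplicial chains of a $\pi$-invariant bounded-geometry triangulation $\Sigma$ of $\widetilde W$, equipped with the (symmetrized) cap product duality operator $D$ of Section~\ref{sec-signature-def}. Its signature, pushed forward through \eqref{eq-map-to-roe-algebra}, is by definition $\sig_\pi(\widetilde W/W)$. On the analytic side I would first reinterpret $\mathrm{Ind}_\pi(D_{\widetilde W})$ as the signature of a Hilbert--Poincar\'e complex, namely the $L^2$-de~Rham complex of $\widetilde W$ (tensored with the $C^*_\pi(\widetilde W)$-module structure coming from smoothly truncated multiplication operators), with Poincar\'e duality implemented by the Hodge $\ast$-operator. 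A bounded transform of the de~Rham differential, as in \cite{HigsonRoeSurgToAnal2}, produces bounded adjointable differentials, and the resulting signature in $K_d(C^*_\pi(\widetilde W))$ coincides with $\mathrm{Ind}_\pi(D_{\widetilde W})$ by a standard Kasparov-style argument identifying the signature of the functional-calculus complex with the index of the first-order operator.

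To compare the two complexes I would use Whitney's explicit map $W\colon C^*(\Sigma)\to \Omega^*(\widetilde W)$ which sends a simplicial cochain to its Whitney form, together with the integration-over-simplices map in the opposite direction. These are mutually inverse up to chain homotopy, are uniformly bounded on $L^2$ for a triangulation of bounded geometry, have finite propagation with respect to the metric on $\widetilde W$, and hence lie in $C^*_\pi(\widetilde W)$. Dualizing via chain isomorphisms between chains and cochains (using the basis given by simplices), one obtains a controlled morphism of complexes of Hilbert $C^*_\pi(\widetilde W)$-modules between the simplicial and de~Rham complexes. The remaining point is to check that this morphism intertwines, up to adjointable chain homotopy in the Higson--Roe sense, the Poincar\'e duality operators on the two sides; this is where one averages with an adjoint and uses the uniform local geometry of $\Sigma$.

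The main obstacle is precisely this last step: Whitney's map is only an approximate intertwiner of Poincar\'e duality at the chain level, and one must produce an explicit bounded, equivariant, finite-propagation chain homotopy implementing the defect. This is the technical content of \cite[Theorems~5.5 and~5.11]{HigsonRoeSurgToAnal2}. Once the homotopy is in hand, the equality $\sig_\pi(\widetilde W/W)=\mathrm{Ind}_\pi(D_{\widetilde W})$ follows from homotopy invariance of signatures of Hilbert--Poincar\'e complexes.
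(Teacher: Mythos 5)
The paper does not prove this statement; it is quoted verbatim from Higson--Roe \cite[Theorems 5.5 and 5.11]{HigsonRoeSurgToAnal2}, so the ``paper's own proof'' is simply the citation. Against the cited source, your outline has the right overall shape --- realize both sides as signatures of controlled Poincar\'e complexes and compare them by a Whitney/integration equivalence --- and you correctly identify the intertwining of the two duality operators as the technical heart. But there is a genuine confusion in your description of the analytic side that would derail a careful write-up.

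The $L^2$ de~Rham complex of $\widetilde W$ is \emph{not} a complex of finitely generated projective Hilbert modules over $C^*_\pi(\widetilde W)$, and the parenthetical about ``tensoring with the $C^*_\pi(\widetilde W)$-module structure coming from smoothly truncated multiplication operators'' does not produce one: $L^2(\widetilde W,\Lambda^* T^*\widetilde W)$ is a Hilbert \emph{space} carrying a covariant representation of $C_0(\widetilde W)$, not a finitely generated Hilbert $C^*_\pi(\widetilde W)$-module, and the Roe algebra is non-unital. Consequently the Hilbert--Poincar\'e complex framework of Section~\ref{sec:hp} (over a unital $C^*$-algebra with finitely generated projective modules) does not apply to it as stated. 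What Higson and Roe actually do is develop a \emph{parallel} notion of ``analytically controlled'' Poincar\'e complexes --- complexes of Hilbert spaces equipped with a nondegenerate $C_0(\widetilde W)$-action, with differentials and dualities required to be controlled and (pseudo)locally compact --- whose signature lands in $K_*(C^*_\pi(\widetilde W))$; the de~Rham complex lives there, and the bounded transform $d\mapsto d(1+\Delta)^{-1/2}$ is used within that framework. The argument is then two clearly separated steps corresponding to the two cited theorems: one comparing the geometrically controlled (simplicial, finitely generated module) signature with the analytically controlled one via a Whitney-type controlled chain equivalence that intertwines dualities up to controlled homotopy, and one identifying the analytically controlled signature with $\operatorname{Ind}_\pi(D_{\widetilde W})$ by functional calculus on the signature operator. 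Your sketch blurs these two steps together and, as written, puts the de~Rham complex in a category where it does not belong; fixing this requires invoking (or re-deriving) the analytically controlled framework rather than forcing the de~Rham complex into the module picture.
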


\subsection{Signature of a Product Manifold}
\label{sec-products}

The signature operator point of view on the signature class makes it easy to verify the product formula in Paragraph~\ref{product}. 

If one of $X$ or $Y$ is even-dimensional, then the signature operator on
$X\times Y$ is the product of the signature operators on $X$ and $Y$; if both
$X$ and $Y$ are   odd-dimensional, then the signature operator on $X\times Y$
is  {the direct sum of two copies of} the product of the signature operators on $X$ and $Y$. The product formula for the signature   class follows from the following commutative diagram:
\[  
\xymatrix{ K_p(B\pi ) \otimes K_q(B\sigma ) \ar[r] \ar[d]  &  K_{p+q}(B \pi {\times} B\sigma) \ar[d] \\
	K_p(C^\ast_r(\pi)) \otimes K_q(C^\ast_r(\sigma)) \ar[r] & K_{p+q}(C^*_r(\pi {\times}\sigma)) }
\]
where the horizontal arrows are the   products in $K$-homology and   $K$-theory, and the vertical maps are index maps (or in other words assembly maps); compare \cite{Chabert00}.
The argument uses the fact that the $K$-homology class of the product of two elliptic operators is equal to the   product of their corresponding $K$-homology classes.  
 
\subsection{{Signature of the Circle}}

The signature operator of $S^1$ is the operator $i \frac{d}{d\theta}$,   which is well known to
represent a generator of $K_1(B\mathbb{Z})\cong \mathbb{Z}$, and whose index
in $K_1(C^*_r(\mathbb{Z}))$ is well known to be a generator of
$K_1(C^*_r(\mathbb{Z}))\cong\mathbb{Z}$.

\subsection{Partitioned Manifold Index Theorem}

In this section we shall describe  a minor extension of Roe's partitioned manifold index theorem \cite{Roe88,Higson91}.  Compare also \cite{Zadeh10}.  We shall consider only the signature operator, but the argument applies to any Dirac-type operator.

Let $A$ be a $C^*$-algebra. If $I$ and $J$ are closed, two-sided ideals in $A$ with  $I+J = A$, then there is a   Mayer-Vietoris sequence in $K$-theory:
 \[   
 \xymatrix{
 	K_0(I\cap J) \ar[r] &  K_0(I) \oplus K_0(J) \ar[r] &  K_0(A)  \ar[d]^{\partial}  \\
 	      K_1(A) \ar[u]^{\partial}  & K_1(I) \oplus K_1(J)\ar[l]  &    K_1(I\cap J) . \ar[l] 
 } 
 \]
 See for example \cite[Section 3]{HigsonRoeYu93}. 
 The boundary maps may be described as follows.  There is an isomorphism
 \[
 I/ (I\cap J) \stackrel \cong \longrightarrow A /J ,
 \]
and the boundary maps are  the compositions
\[
K_*(A) \longrightarrow K_*(A/J) \cong K_*(I/(I\cap J)) \longrightarrow K_{*-1} (I\cap J),
\]
where the last arrow is the boundary map in the $K$-theory long exact sequence for the ideal $I\cap J \subseteq I$.  The same map is obtained, up to sign, if $I$ and $J$ are switched.

Now let $W$ be  a complete Riemannian manifold that    is continuously controlled over $\R$, and let $\widetilde W$ be a principal $\pi$-bundle	 over $W$.  Let $X\subseteq W$ be the transverse inverse image of $0\in \R$ and let $\widetilde X$ be the restriction of $\widetilde W$ to $X$.

Denote by $I$ the ideal in $A= {C}^*_\pi (\widetilde W)$ generated by all finite propagation, locally compact operators  that are supported in  $c^{-1}[a,\infty)$ for some $a\in \R$,  and  denote by $J$ the ideal  in $ A$ generated by all    finite propagation, locally compact operators that are supported on  $c^{-1}(-\infty, a]$ for some $a \in \R$. 
The sum $I +J$ is equal to $ A$.

The intersection $ 
I \cap J $  is generated by all operators that are supported in the inverse images under the control map of   compact subsets of $\R$.   
Any finite propagation isometry from a standard $\widetilde X$ module into a
standard  $\widetilde W$-module induces an inclusion of $C^*_\pi(\widetilde
X)$ into the intersection, and this inclusion induces a canonical isomorphism
in $K$-theory; see \cite[Section 5]{HigsonRoeYu93}. Following
Remark~\ref{rem-P-mod-pi-cpt}, since $X$ is compact we therefore obtain
canonical isomorphisms
 \[
 K_*(C^*_r(\pi)) \stackrel \cong \longrightarrow K_*(C^*_\pi (\widetilde X))\stackrel \cong \longrightarrow K_*(I \cap J).
 \]

 \begin{theorem}
 Let $W$ be  an oriented,   complete Riemannian manifold of dimension $d{+}1$ that is continuously controlled over $\R$, and let $\widetilde W$ be a $\pi$-principal bundle over $W$.
The Mayer-Vietoris boundary homomorphism
\[
\partial \colon K_{d{+}1} (C^*_\pi (\widetilde W)) \longrightarrow K_{d} (C^*_r (\pi))
\]
 maps the index class  of  {the signature operator} $D_{\widetilde W}$
 to the index class of  {the signature operator} $D_{\widetilde X}$ if
 $d$ is even, and to twice the index class of $D_{\widetilde X}$  if $d$ is
 odd.
\end{theorem}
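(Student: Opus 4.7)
The plan is to combine Roe's original partitioned manifold index theorem \cite{Roe88} (in the equivariant form of \cite{Higson91, Zadeh10}) with the product formula of Section~\ref{sec-products} for the signature operator. In the form we need, Roe's theorem states that for a Dirac-type operator $D$ on a partitioned manifold $W = W_+ \cup_{X} W_-$ satisfying the bounded-geometry and completeness hypotheses of the present theorem, the Mayer-Vietoris boundary $\partial [D_{\widetilde W}]$ equals the coarse index of the Dirac-type operator on $\widetilde X$ obtained by restricting the Clifford-module structure of $\widetilde W$. The task is therefore to identify this restricted operator when the ambient operator is the signature operator, and to show that in the even-$d$ case this restriction is a single copy of $D_{\widetilde X}$ while in the odd-$d$ case it is a direct sum of two copies.

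The first step is geometric. By the homotopy invariance of coarse index classes, and by a $\pi$-equivariant modification of the Riemannian metric of $\widetilde W$ inside a tubular neighborhood of $\widetilde X$ (which can be arranged without disturbing the partition or the fact that $0 \in \R$ is a regular value of $c$), I would reduce to the case in which a neighborhood of $\widetilde X$ in $\widetilde W$ is isometric to the Riemannian product $\widetilde X \times (-\epsilon, \epsilon)$. Under the Clifford-algebra decomposition $\mathrm{Cliff}(T\widetilde W)|_{\widetilde X} \cong \mathrm{Cliff}(T\widetilde X)\, \widehat{\otimes}\, \mathrm{Cliff}(\R)$, the complexified exterior bundle of $\widetilde W$ decomposes as the corresponding graded tensor product, and $D_{\widetilde W}$ becomes the external product $D_{\widetilde X}\, \#\, D_{\R}$ of the two signature operators, with $D_{\R} = i\,\frac{d}{dt}$ the one-dimensional signature operator.

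Exactly as in Section~\ref{sec-products}, this external product realizes a single copy of the product Dirac operator whenever at least one factor is even-dimensional (the case $d$ even, because $\dim \R = 1$) and a direct sum of two copies when both factors are odd-dimensional (the case $d$ odd). Applying Roe's partitioned manifold index theorem to $D_{\widetilde X}\, \#\, D_{\R}$ then identifies $\partial [D_{\widetilde W}]\in K_{d}(C^*_r(\pi))$ with the index of the induced operator on $\widetilde X$, namely $[D_{\widetilde X}]$ when $d$ is even and $2\,[D_{\widetilde X}]$ when $d$ is odd, which is precisely the formula $\partial [D_{\widetilde W}] = 2^{\varepsilon}[D_{\widetilde X}]$.

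The principal technical obstacle is to verify that the Mayer-Vietoris boundary for the specific pair of ideals $I, J \subseteq C^*_\pi(\widetilde W)$ built from the control map $c$ coincides with the boundary map that appears in Roe's proof, which is typically expressed either via the normalized functional calculus $\chi(D)$ for an odd chopping function $\chi$ or, equivalently, via a pairing with a Heaviside-type cut-off on $\R$. It suffices to check this on a single generator (e.g.\ on the index class of a collar cylinder), where both sides can be computed by hand. Once that compatibility is in place, the remainder of the argument is exactly Roe's original local index computation applied to an external product of Dirac-type operators, and the factor $2^{\varepsilon}$ is delivered automatically by the signature product formula.
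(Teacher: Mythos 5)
Your proposal is essentially the paper's proof, spelled out in more detail. The paper simply cites the argument of Roe's original paper (as elaborated in Higson's survey) to reduce to the product case $W = X\times\R$ with $c$ the projection, and then appeals to the same local index computation and to the signature product formula of Section~\ref{sec-products} to extract the factor $2^\varepsilon$ when both $X$ and $\R$ are odd-dimensional; you fill in the intermediate steps (equivariant modification of the metric to a collar product, the Clifford-module decomposition identifying the restricted operator with $D_{\widetilde X}\,\#\,D_\R$, and the compatibility of the Mayer-Vietoris boundary with the boundary map in Roe's functional-calculus argument). One mild caution on phrasing: after you have reduced to the product $\widetilde X\times\R$, what you invoke is not Roe's theorem again as a black box but rather the direct local index computation on a cylinder (Higson's ``one-dimensional Toeplitz'' calculation), since the statement you want \emph{is} a version of Roe's theorem and the circularity should be avoided in the write-up. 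With that wording fixed, the argument matches the paper's.
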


\begin{proof} 
The argument of \cite{Roe88} reduces the theorem  to the case of a product manifold, that is, $W = X{\times} \mathbb R$ with the control map $c\colon X{\times} \mathbb R \to \mathbb R$ given by the projection to $\mathbb R$ (see also \cite{Higson91}). 
The case of the product manifold  $X{\times}\R$ follows from  a direct
computation, as in \cite{Higson91}.  Once again, the reason for the factor of
$2$, in the case when $d$ is odd, is that signature operator on $X{\times} \R$
is \emph{{the direct sum of two copies of}} the product of the signature operators of $X$ and $\R$ in this case.
\end{proof}
 
 \begin{corollary}\label{cor-partition}
 Assume that $W$ has bounded geometry.  The Mayer-Vietoris boundary homomorphism
\[
\partial \colon K_{d+1} (C^*_\pi (\widetilde W)) \longrightarrow K_{d} (C^*_r (\pi))
\]
 maps $\sig_\pi (\widetilde W/ W)$ to the transverse signature class $\sig_{c,\pi} (\widetilde W/ W)$, if $d$ is even, and it maps it to twice the transverse signature class of $W$, if $d$ is odd. \qed
 \end{corollary}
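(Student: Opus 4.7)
The plan is to reduce Corollary~\ref{cor-partition} to the preceding theorem by translating everything into index-theoretic language via the identification of the combinatorial signature class with the analytic index of the signature operator.

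First, I would recall from Section~\ref{sec-sig-op} (the theorem attributed to \cite{HigsonRoeSurgToAnal2}) that whenever $W$ has bounded geometry and $\widetilde{W}$ is a $\pi$-principal bundle over $W$, the combinatorial signature class and the analytic index of the signature operator agree:
\[
\sig_\pi(\widetilde W/W) \;=\; \operatorname{Ind}_\pi(D_{\widetilde W}) \quad\text{in } K_{d+1}\bigl(C^*_\pi(\widetilde W)\bigr).
\]
This is the key translation, because the partitioned manifold index theorem stated just before the corollary is phrased for the operator index, not for the combinatorial signature class.

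Next, I would fix a regular value $a \in \mathbb{R}$ for the control map $c$ and set $X = c^{-1}[a]$, a closed, oriented submanifold of dimension $d$. By definition of the transverse signature class and, again, by the identification of the combinatorial and analytic signatures applied to the closed manifold $X$,
\[
\sig_{c,\pi}(\widetilde W/W) \;=\; \sig_\pi(\widetilde X/X) \;=\; \operatorname{Ind}_\pi(D_{\widetilde X}) \quad\text{in } K_d(C^*_r(\pi)),
\]
where we have used the canonical isomorphism $K_*(C^*_\pi(\widetilde X)) \cong K_*(C^*_r(\pi))$ that holds because $X$ is compact (see Remark~\ref{rem-P-mod-pi-cpt} together with the identification $K_*(I\cap J) \cong K_*(C^*_r(\pi))$ described just before the theorem).

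With both classes now expressed as analytic indices, the corollary becomes a direct application of the preceding theorem: the Mayer–Vietoris boundary homomorphism $\partial$ sends $\operatorname{Ind}_\pi(D_{\widetilde W})$ to $\operatorname{Ind}_\pi(D_{\widetilde X})$ when $d$ is even, and to $2\operatorname{Ind}_\pi(D_{\widetilde X})$ when $d$ is odd. Combining with the two identifications above yields exactly the claimed image of $\sig_\pi(\widetilde W/W)$. The only real content here is the identification of the combinatorial signature with the operator index in the bounded-geometry non-compact setting; once that is in hand, the corollary is a purely formal consequence of the partitioned manifold theorem, so I do not anticipate a genuine obstacle beyond quoting the correct results from Sections~\ref{sec-signature-def}--\ref{sec-sig-op}.
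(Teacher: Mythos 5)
Your proposal is correct and is exactly the paper's intended (implicit) argument: the corollary carries a \qed precisely because it reduces to the preceding partitioned-manifold index theorem via the identification $\sig_\pi(\widetilde W/W)=\operatorname{Ind}_\pi(D_{\widetilde W})$ from Section~\ref{sec-sig-op}, together with the analogous identification over the compact hypersurface $X$ and the isomorphism $K_*(C^*_\pi(\widetilde X))\cong K_*(C^*_r(\pi))$.
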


\subsection{The Transverse Signature}
\label{sec-transverse-signature}

We can now prove Theorem~\ref{thm-bounded-homotopy-invariance}.  Let $Z$ and $W$ be as in the statement.  The boundedly controlled homotopy equivalence between them identifies  $\sig_\pi(\widetilde W/ W)$ with  $\sig_\pi(\widetilde Z/ Z)$.  Since  the equivalence is continuously controlled over $\R$ there is a commuting diagram
\[
\xymatrix{
K_{d+1} (C^*_\pi (\widetilde W))\ar[d] \ar[r] &  K_{d} (C^*_r (\pi)) 
\ar@{=}[d] \\
K_{d+1} (C^*_\pi (\widetilde Z)) \ar[r]  &  K_{d} (C^*_r(\pi))   }
\]
involving Mayer-Vietoris boundary homomorphisms. Thanks to Roe's partitioned
manifold  {index theorem, in the form of Corollary~\ref{cor-partition},} the horizontal maps send  the signature classes $\sig_\pi(\widetilde W/ W)$ and $\sig_\pi(\widetilde Z/ Z)$ to $2^\varepsilon$ times the transverse signature  classes $  \sig_{c,\pi}(\widetilde W/ W)$  and $  \sig_{c,\pi}(\widetilde Z / Z)$, respectively.  So the two transverse signature classes, times $2^\varepsilon$, are equal.

\section{Invariance Under Eventual Homotopy Equivalences}
\label{sec-eventual-htp-eq-invariance}
 
 It remains to prove Theorem~\ref{thm-eventual-homotopy-invariance}. This is what we shall do  here, and it is at this point that we shall make proper use of the complex $C^*_\pi( \Sigma_{\bullet}, S)$ that we introduced in Section~\ref{sec-signature-def}. 

\subsection{Eventual Signature}

Let $P$ be proper metric space and assume that  a discrete group $\pi$ acts on $P$ properly and through isometries. 

\begin{definition}
Let $S,T$ be   sets equipped with $\pi$-actions, and let 
\[
c_S \colon S  \longrightarrow    P
\quad \text{and} \quad 
 c_T \colon T \longrightarrow    P
 \]
be $\pi$-equivariant proper control maps.  A boundedly geometrically controlled linear operator $A\colon \C [ S ]\to  \C [ T ]$ is  \emph{$\pi$-compactly supported} if   there exists a  closed subset $F \subseteq P$ whose quotient by $\pi$ is compact such that $A_{t,s} =0$  unless $c_S(s) \in F$ and $c_T(t) \in F$.
Define 
 \[
 C_{P,\pi,\cpt}^\ast(T,S) \subseteq C^*_{P,\pi} (T,S)  
 \]
 to be the operator norm-closure  of the linear space of   $\pi$-compactly supported, boundedly geometrically controlled  operators. 
\end{definition}

When $S=T$ the space $ C_{P,\pi,\cpt}^\ast(S,S) $ is a closed two-sided ideal of  the $C^*$-algebra $ C_{P,\pi}^\ast(S,S) $.  

\begin{definition}
 We define  the quotient $C^*$-algebra
   \begin{equation*}
 C^*_{P,\pi,\evtl}(S,S):=C^*_{P,\pi}(S,S)/C^*_{P,\pi,\cpt}(S,S) 
\end{equation*}
and the quotient space 
\[
 C^\ast_{P,\pi,\evtl}(T,S)  :=  C^\ast_{P,\pi}(T,S)  / C^\ast_{P,\pi,\cpt}(T,S)  .
\]
 Under the assumption $T\subseteq S$  the space  $C^*_{P,\pi,\evtl}(T,S)$
is a finitely generated and projective Hilbert $C^*_{P,\pi,\evtl}(S,S)$-module. 
\end{definition}

Now suppose $W$ is a $(d{+}1)$-dimensional, connected, complete, oriented Riemannian   manifold  of   bounded geometry and that $P=\widetilde W$ is a principal $\pi$-space over  {$W$}. 
 Choose a triangulation of bounded geometry of the base (compare
  \cite[5.8, 5.9]{HigsonRoeSurgToAnal1}), and lift to a $\pi$-invariant
triangulation $\Sigma$  of  $\widetilde W$. As before, let $S$ be the
disjoint union of all the $\Sigma_p$.  We have the following
Hilbert-Poincar\'{e} complex  over $ C_{{\widetilde W},\pi,\evtl}^\ast(S,S) $: 
\begin{equation}
   C^*_{{\widetilde W},\pi,\evtl}( \Sigma_0,S )  \xleftarrow{b }
   C^*_{{\widetilde W},\pi,\evtl}( \Sigma_1,S )  \xleftarrow{b } \cdots
   \xleftarrow{b }  C^*_{{\widetilde W},\pi,\evtl}(\Sigma_{d{+}1},S ) ,
\end{equation}
with duality operator obtained from cap product with the fundamental cycle as
before.

\begin{definition}
The \emph{eventual  signature class} of $\widetilde W\to W$ is defined to be 
\[
\sig_{\pi,\evtl}(\widetilde W/W):=\sig_{\pi,\evtl} (\Sigma,S) \in K_{d+1}(
C^*_{{\widetilde W},\pi,\evtl}(S,S) ).
\]
\end{definition}

\begin{theorem}\label{thm:htpyinv}
The eventual   signature class  is invariant under  oriented eventual homotopy
equivalences  which are boundedly controlled and compatible with the
  principal $\pi$-bundles. 
\end{theorem}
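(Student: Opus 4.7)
My plan is to adapt the Higson--Roe strategy for homotopy invariance of the $C^*$-algebraic signature to the eventual Hilbert--Poincar\'e complex over $C^*_{\widetilde W,\pi,\evtl}(S,S)$. The key feature is that the eventual quotient precisely discards contributions supported on $\pi$-saturations of compact sets, i.e.\ exactly the locus where an eventual morphism may fail to be defined, so that an eventual homotopy equivalence should induce a bona fide chain homotopy equivalence of eventual complexes.

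First I would fix $\pi$-invariant, bounded geometry triangulations $\Sigma^Z$ of $\widetilde Z$ and $\Sigma^W$ of $\widetilde W$, lifted from triangulations of the compact bases. After sufficient iterated barycentric subdivision of $\Sigma^Z$ (whose depth is controlled by the moduli of continuity of $f$ and $g$, which are bounded by the bounded-control hypothesis), I would produce simplicial approximations $f_\Delta\colon \Sigma^Z \to \Sigma^W$ and $g_\Delta\colon \Sigma^W \to \Sigma^Z$ to the lifted eventual maps, defined on the complement of a $\pi$-compact set. Bounded control ensures that $f_\Delta$ displaces each simplex by a bounded metric amount, which translates to bounded geometric control of the induced chain maps between $\C[\Sigma^Z_\bullet]$ and $\C[\Sigma^W_\bullet]$; hence these extend to bounded adjointable Hilbert module operators. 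Because $f_\Delta, g_\Delta$ are only defined modulo $\pi$-compact sets, these chain maps depend on arbitrary choices on the remaining compact piece, but any two such choices coincide in the eventual algebras.

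Applying the same recipe to the bounded eventual homotopies $H_Z$ and $H_W$, treated as eventual simplicial maps $\Sigma^Z \times I \morphism \Sigma^Z$ and $\Sigma^W \times I \morphism \Sigma^W$ after a sufficient subdivision, produces algebraic chain homotopies, in the eventual algebras, between $g_\Delta f_\Delta$, $f_\Delta g_\Delta$ and the respective identities. Equivariance is automatic from the compatibility of all maps with the $\pi$-principal bundles. Since the eventual homotopy equivalence is orientation-preserving, $f_\Delta$ sends the eventual fundamental class of $\widetilde Z$ to that of $\widetilde W$, so cap product with the fundamental cycle intertwines $D_Z$ and $f_\Delta^\ast D_W f_\Delta$ up to a chain homotopy coming from the standard higher cap-product formulas. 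One thereby obtains a morphism, and then a homotopy equivalence, of Hilbert--Poincar\'e complexes in the sense of \cite[Section 4]{HigsonRoeSurgToAnal1}.

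The conclusion then follows from the general oriented-homotopy invariance of the signature of a Hilbert--Poincar\'e complex (\cite[Section 4]{HigsonRoeSurgToAnal1}), applied over $C^*_{\widetilde W,\pi,\evtl}(S,S)$. The main obstacle I anticipate is bookkeeping: verifying that simplicial approximation, chain-homotopy construction and fundamental-class compatibility can all be carried out uniformly, within the boundedly geometrically controlled $\pi$-equivariant framework, and that every deviation from an exact equality is $\pi$-compactly supported, so that passage to the eventual quotient absorbs it. The Higson--Roe machinery is already coarse-geometrically well-behaved, so the only genuinely new content is checking that the eventual quotient is precisely the right device to kill the defects introduced by having the maps and homotopies defined only outside a compact set.
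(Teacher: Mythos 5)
Your proposal follows essentially the same route as the paper's proof: produce chain maps from the eventual maps and homotopies (using bounded control and bounded geometry), note that all ambiguities are $\pi$-compactly supported and hence die in the eventual quotient, verify that orientation-preservation gives compatibility with the duality operators via the (relative) fundamental cycle, and then invoke the homotopy invariance of the signature of a Hilbert--Poincar\'e complex from \cite[Theorem 4.3]{HigsonRoeSurgToAnal1}. The paper is more terse, compressing your simplicial-approximation discussion into the phrase ``define in the usual way chain maps $f_*, g_*$,'' but the strategy and the key observations are the same.
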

\begin{proof}
 This is a special case of \cite[Theorem
          4.3]{HigsonRoeSurgToAnal1} once we have shown that our eventual
          homotopy equivalence $f\colon Z\morphism W$ induces an algebraic
          homotopy 
          equivalence of Hilbert-Poincar\'e complexes in the sense of
          \cite[Definition 4.1]{HigsonRoeSurgToAnal1}. For this, we observe
          that the map $f$ and its eventual homotopy inverse $g\colon
          W\morphism Z$, although only defined outside a compact subset,
          define in the usual way chain maps $f_*,g_*$ between the
          $C^*_{\widetilde W,\pi,\evtl}(S,S)$-Hilbert chain
          complexes of the triangulations, as compactly supported morphisms
          are divided out. For the construction of $f_*$, we need bounded control of the
          maps and bounded geometry of the triangulations. Similarly, the
          eventual homotopies can be used in the standard way to obtain chain
          homotopies between the identity and the composition of these induced maps,
          showing that $f_*$ is a homology isomorphism. Finally, $f$ being
          orientation-preserving, it maps the (locally finite) fundamental cycle of $Z$,
          considered as a cycle relative to a suitable compact subset, to a
          cycle homologous to the fundamental cycle of $W$, again relative to
          a suitable compact subset. As the duality operator of the
          Hilbert-Poincar\'e chain complex $C^*_{\widetilde
            W,\pi,\evtl}(\Sigma_\bullet,S)$, obtained from cap product with the
          fundamental cycle, 
          is determined already by such a relative fundamental cycle, the
          induced map $f_*$ intertwines the two duality operators in the sense
          of \cite[Definition 4.1]{HigsonRoeSurgToAnal1}. Therefore 
          \cite[Theorem 4.3]{HigsonRoeSurgToAnal1} implies the assertion.
\end{proof}

\subsection{Proof of Theorem~\ref{thm-eventual-homotopy-invariance}}

Theorem~\ref{thm-eventual-homotopy-invariance} is an almost immediate corollary of Theorem $\ref{thm:htpyinv}$ above.  We just need the following supplementary computation:

\begin{lemma}
\label{lem-swindle}
Assume that  $W$ is  noncompact.
The kernel of the  homomorphism 
 \[
 K_*( C_{ {\widetilde W},\pi}^\ast(S,S))
 \longrightarrow
 K_*(  C_{ {\widetilde W},\pi,\evtl}^\ast(S,S))
\] 
induced from the quotient map from   $C^*_{{\widetilde W},\pi}(S,S)$ to
$C^*_{ {\widetilde W},\pi,\evtl}(S,S)$  is included in the kernel of the homomorphism
\[
 K_*( C_{ {\widetilde W},\pi}^\ast(S,S))
 \longrightarrow 
 K_*(C^*_{\pi}( {\widetilde W})).
 \]
\end{lemma}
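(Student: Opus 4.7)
My plan is to apply the six-term exact sequence to the ideal $J = C^*_{\widetilde W,\pi,\cpt}(S,S)$ inside $A = C^*_{\widetilde W,\pi}(S,S)$: exactness at $K_*(A)$ identifies the kernel of the quotient map $K_*(A) \to K_*(A/J)$ with the image of $K_*(J) \to K_*(A)$, so it is enough to prove that the composition $K_*(J) \to K_*(A) \to K_*(C^*_\pi(\widetilde W))$ vanishes. The map $A \to C^*_\pi(\widetilde W)$ of \eqref{eq-map-to-roe-algebra} arises from a $\pi$-equivariant, finite-propagation isometric inclusion $\ell^2(S) \hookrightarrow H$ into a standard $\widetilde W$-module, and therefore sends $\pi$-compactly supported operators into the ideal $B \subset C^*_\pi(\widetilde W)$ consisting of (closures of) $\pi$-equivariant, finite-propagation, locally compact operators whose support is $\pi$-cocompact in $\widetilde W$. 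So the composition factors through $K_*(B)$, and the task reduces to showing that the inclusion $B \hookrightarrow C^*_\pi(\widetilde W)$ is zero on K-theory.

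To prove this I would carry out an Eilenberg swindle, and this is precisely where the hypothesis that $W$ is noncompact is needed. Any class in $K_*(B)$ is represented by operators supported in a fixed $\pi$-cocompact closed subset $F \subset \widetilde W$ whose projection $K = F/\pi$ is compact in $W$. Because $W$ is noncompact and of bounded geometry, one can find a sequence of pairwise disjoint, uniformly separated translates $K_n \subset W$ of $K$; lifting equivariantly yields $\pi$-cocompact sets $F_n \subset \widetilde W$ (with $F_0 = F$) and $\pi$-equivariant partial isometries $V_n$ of uniformly bounded propagation in the multiplier algebra of $C^*_\pi(\widetilde W)$, with pairwise orthogonal ranges, translating $F$ onto $F_n$. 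For any $T \in B$ supported in $F$, the series $\widetilde T := \sum_n V_n T V_n^*$ converges strictly to an element of $C^*_\pi(\widetilde W)$ (uniform propagation plus disjoint supports). Setting $V := \sum_n V_{n+1} V_n^*$, one computes $V \widetilde T V^* = \widetilde T - T$, whence in K-theory $[\widetilde T] = [\widetilde T] - [T]$, forcing $[T] = 0$.

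The main technical obstacle is the construction of the partial isometries $V_n$ with all of the required properties at once: $\pi$-equivariance, uniformly bounded propagation, mutually orthogonal ranges, and a coherent translation action on $\pi$-cocompact supports. Bounded geometry of $\widetilde W$ together with the noncompactness of $W$ does provide enough room in $W$ to place infinitely many disjoint translates of any given compact subset, but care is required --- most naturally via a choice of measurable fundamental domain for the $\pi$-action on $\widetilde W$ and an induction argument --- to set up the equivariant partial isometries and to verify that the strict limit $\widetilde T$ really lies in $C^*_\pi(\widetilde W)$, namely is finite-propagation and locally compact.
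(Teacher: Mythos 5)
Your overall strategy agrees with the paper's in its first two steps: you use the long exact sequence in $K$-theory for the ideal $J = C^*_{\widetilde W,\pi,\cpt}(S,S)$ inside $A = C^*_{\widetilde W,\pi}(S,S)$ to identify the kernel of interest with the image of $K_*(J)\to K_*(A)$, and you observe (correctly) that the canonical inclusion $A\to C^*_\pi(\widetilde W)$, being induced by a finite-propagation isometry, carries $J$ into the ideal $B$ of $\pi$-cocompactly supported operators in the equivariant Roe algebra, so it suffices to show that $K_*(B)\to K_*(C^*_\pi(\widetilde W))$ is zero.

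The divergence is in how this vanishing is established. The paper does not attempt a swindle on $B$ directly. Instead it chooses a ray $R\subset W$ going to infinity and passes through the larger ideal $C^*_\pi(\widetilde R\subset\widetilde W)$ of operators supported in bounded neighborhoods of the preimage $\widetilde R$; since any $\pi$-cocompact set lies in a bounded neighborhood of $\widetilde R$, $B$ maps into this ideal, and one then quotes the standard fact that $K_*(C^*_\pi(\widetilde R\subset\widetilde W))=0$ (proved by an Eilenberg swindle along the ray). The ray is precisely what supplies a coherent ``shift to infinity.''

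Your version tries to build the swindle from scratch without a ray, and this is where a genuine gap sits. You assert the existence of $\pi$-equivariant partial isometries $V_n$ ``of uniformly bounded propagation'' carrying $F$ onto $F_n$. But $F_n$ lies over a compact set $K_n\subset W$ that marches off to infinity, so any equivariant partial isometry between $\ell^2$-spaces over $F$ and over $F_n$ must move points by at least $d(K_0,K_n)$; the propagations of the individual $V_n$ are therefore necessarily unbounded, and the claim as stated is false. What the swindle actually requires is subtler: (i) uniformly bounded propagation of the transitions $V_{n+1}V_n^*$, so that $V=\sum_n V_{n+1}V_n^*$ is a finite-propagation multiplier; and, more seriously, (ii) uniformly bounded propagation of the conjugates $V_nTV_n^*$, so that $\widetilde T=\sum_n V_nTV_n^*$ actually lies in $C^*_\pi(\widetilde W)$. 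Condition (ii) demands that each $V_n$ be induced by a coarse equivalence $F\to F_n$ with constants uniform in $n$; in a general bounded-geometry manifold there is no ``translation'' giving this for free, and any construction would in effect re-create the ray by stringing the $K_n$ along a geodesic going to infinity and building the $V_n$ inductively along it. So the step you defer as ``the main technical obstacle'' is not a routine verification --- it is the heart of the argument, and is exactly what the paper outsources to the known flasqueness of ray algebras rather than reproving.
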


\begin{proof}
Since $W$ is noncompact it contains  a ray $R$ that goes to
  infinity. Let $\widetilde  R\subset \widetilde W$ be its inverse image under the
  projection $\widetilde W\to W$. We have the ideal $C^*_{\pi}(\widetilde  R\subset
  \widetilde W)$ generated by operators supported in bounded neighborhoods of
  $\widetilde  R$. Because every $\pi$-compactly supported operator is also
  supported in a bounded neighborhood of $\widetilde  R$ we get a commutative
  diagram
  \begin{equation}\label{eq:algebra_net}
    \begin{CD}
      C^*_{\widetilde W,\pi,\cpt}(S,S) @>>> C^*_{\widetilde W,\pi}(S,S)
      @>>> C^*_{\widetilde W,\pi,\evtl}(S,S)\\
      @VVV @VVV\\
      C^*_{\pi}(\widetilde  R\subset\widetilde W) @>>> C^*_{\pi}(\widetilde W).
    \end{CD}
  \end{equation}
  We have a canonical isomorphism $K_*(C^*_{\pi}(\widetilde  R))\xrightarrow{\cong}
  K_*(C^*_{\pi}(\widetilde  R\subset\widetilde W))$, see \cite[Proposition
  2.9]{SchickZadeh}, and a standard Eilenberg swindle 
  shows that
  \begin{equation}\label{eq:vanishing}
    K_*(C^*_{\pi}(\widetilde  R\subset\widetilde W)) = 0.
  \end{equation}
See \cite[Section 7, Proposition 1]{HigsonRoeYu93} or \cite[Proposition
2.6]{SchickZadeh}.
Now we apply K-theory to the diagram \eqref{eq:algebra_net}. The long exact
K-theory sequence of the first line of \eqref{eq:algebra_net}
shows that the kernel we have to consider is equal to the image of the map
induced by the first arrow of this line. Naturality and commutativity implies
by \eqref{eq:vanishing} that this image is mapped to zero in
$K_*(C^*_{\pi}(\widetilde W))$, as claimed.
\end{proof}

\begin{proof}[Proof of   Theorem~\ref{thm-eventual-homotopy-invariance}]
If $W$ is compact, then so is $Z$, and the transverse signatures of both are zero.  Otherwise, from Theorem $\ref{thm:htpyinv}$ we have that 
\[ 
\sig_{\pi,\evtl}(\widetilde W / W) =  \sig_{\pi,\evtl}(\widetilde Z/ Z) \in
K_{ {d+1}}( C_{{\widetilde W},\pi,\evtl}^\ast(S,S) ) ,
\]
and from Lemma~\ref{lem-swindle} we conclude from this  that 
\[
\sig_{\pi}(\widetilde W / W)  = \sig_{\pi}(\widetilde Z/ Z)  \in K_{d{+}1}(
C_{\pi}^\ast({\widetilde W})). \]
 The theorem now follows from  Theorem \ref{thm-bounded-homotopy-invariance},
 the appropriate version of  Roe's partitioned manifold index
theorem.\end{proof}

  \begin{remark}
 The proof of the positive scalar curvature result in \cite{HankePapeSchick15}
 directly manipulates the spinor Dirac operator in situations where good
 spectral control is available only outside a compact set. This is parallel to
 our treatment of \emph{eventual} homotopy equivalences. There is a large body
 of work that studies the homotopy invariance of signatures, and consequences,
 using signature operator methods rather than triangulations.  For example the
 results of \cite{HigsonRoeSurgToAnal1,HigsonRoeSurgToAnal2} are analyzed from
 this point of view in \cite{MR3514938}, using methods  in part borrowed from
 \cite{MR1142484, MR3029424}. It seems to be a challenge to prove Theorem
 \ref{thm-main-theorem}, and in particular Theorem
 \ref{thm-eventual-homotopy-invariance}, using signature operator methods.
 But it might be useful to do so in order to study secondary invariants like
 $\rho$-invariants in the situation of Theorem \ref{thm-main-theorem}.
  \end{remark}

\bibliographystyle{abbrv}
\bibliography{ref}

\begin{thebibliography}{10}

\bibitem{Attie94}
O.~Attie.
\newblock Quasi-isometry classification of some manifolds of bounded geometry.
\newblock {\em Math. Z.}, 216(4):501--527, 1994.

\bibitem{BCH94}
P.~Baum, A.~Connes, and N.~Higson.
\newblock Classifying space for proper actions and {$K$}-theory of group
  {$C^\ast$}-algebras.
\newblock In {\em {$C^\ast$}-algebras: 1943--1993 ({S}an {A}ntonio, {TX},
  1993)}, volume 167 of {\em Contemp. Math.}, pages 240--291. Amer. Math. Soc.,
  Providence, RI, 1994.

\bibitem{Chabert00}
J.~Chabert.
\newblock Baum-{C}onnes conjecture for some semi-direct products.
\newblock {\em J. Reine Angew. Math.}, 521:161--184, 2000.

\bibitem{GongWangYu08}
G.~Gong, Q.~Wang, and G.~Yu.
\newblock Geometrization of the strong {N}ovikov conjecture for residually
  finite groups.
\newblock {\em J. Reine Angew. Math.}, 621:159--189, 2008.

\bibitem{MGBL83}
M.~Gromov and H.~B. Lawson, Jr.
\newblock Positive scalar curvature and the {D}irac operator on complete
  {R}iemannian manifolds.
\newblock {\em Inst. Hautes {\'E}tudes Sci. Publ. Math.}, (58):83--196 (1984),
  1983.

\bibitem{MR2217050}
E.~Guentner, N.~Higson, and S.~Weinberger.
\newblock The {N}ovikov conjecture for linear groups.
\newblock {\em Publ. Math. Inst. Hautes \'Etudes Sci.}, (101):243--268, 2005.

\bibitem{HankePapeSchick15}
B.~Hanke, D.~Pape, and T.~Schick.
\newblock Codimension two index obstructions to positive scalar curvature.
\newblock {\em Ann. Inst. Fourier (Grenoble)}, 65(6):2681--2710, 2015.

\bibitem{Higson91}
N.~Higson.
\newblock A note on the cobordism invariance of the index.
\newblock {\em Topology}, 30(3):439--443, 1991.

\bibitem{HigsonRoeSurgToAnal1}
N.~Higson and J.~Roe.
\newblock Mapping surgery to analysis. {I}. {A}nalytic signatures.
\newblock {\em $K$-Theory}, 33(4):277--299, 2005.

\bibitem{HigsonRoeSurgToAnal2}
N.~Higson and J.~Roe.
\newblock Mapping surgery to analysis. {II}. {G}eometric signatures.
\newblock {\em $K$-Theory}, 33(4):301--324, 2005.

\bibitem{MR2220524}
N.~Higson and J.~Roe.
\newblock Mapping surgery to analysis. {III}. {E}xact sequences.
\newblock {\em $K$-Theory}, 33(4):325--346, 2005.

\bibitem{HigsonRoeYu93}
N.~Higson, J.~Roe, and G.~Yu.
\newblock A coarse {M}ayer-{V}ietoris principle.
\newblock {\em Math. Proc. Cambridge Philos. Soc.}, 114(1):85--97, 1993.

\bibitem{MR1142484}
M.~Hilsum and G.~Skandalis.
\newblock Invariance par homotopie de la signature {\`a} coefficients dans un
  fibr{\'e} presque plat.
\newblock {\em J. Reine Angew. Math.}, 423:73--99, 1992.

\bibitem{GK88}
G.~Kasparov.
\newblock Equivariant {$KK$}-theory and the {N}ovikov conjecture.
\newblock {\em Invent. Math.}, 91(1):147--201, 1988.

\bibitem{MR1388299}
G.~G. Kasparov.
\newblock {$K$}-theory, group {$C^*$}-algebras, and higher signatures
  (conspectus).
\newblock In {\em Novikov conjectures, index theorems and rigidity, {V}ol.\ 1
  ({O}berwolfach, 1993)}, volume 226 of {\em London Math. Soc. Lecture Note
  Ser.}, pages 101--146. Cambridge Univ. Press, Cambridge, 1995.

\bibitem{Land-Nikolaus}
M.~Land and T.~Nikolaus.
\newblock On the relation between {K}- and {L}-theory of {$C^*$}-algebras.
\newblock arXiv:1608.02903, 2016.

\bibitem{MR0362407}
A.~S. Mishchenko.
\newblock Infinite-dimensional representations of discrete groups, and higher
  signatures.
\newblock {\em Izv. Akad. Nauk SSSR Ser. Mat.}, 38:81--106, 1974.

\bibitem{MR0292913}
S.~P. Novikov.
\newblock Algebraic construction and properties of {H}ermitian analogs of
  {$K$}-theory over rings with involution from the viewpoint of {H}amiltonian
  formalism. {A}pplications to differential topology and the theory of
  characteristic classes. {I}. {II}.
\newblock {\em Izv. Akad. Nauk SSSR Ser. Mat.}, 34:253--288; ibid. 34 (1970),
  475--500, 1970.

\bibitem{MR3514938}
P.~Piazza and T.~Schick.
\newblock The surgery exact sequence, {K}-theory and the signature operator.
\newblock {\em Ann. K-Theory}, 1(2):109--154, 2016.

\bibitem{Roe88}
J.~Roe.
\newblock Partitioning noncompact manifolds and the dual {T}oeplitz problem.
\newblock In {\em Operator algebras and applications, {V}ol.\ 1}, volume 135 of
  {\em London Math. Soc. Lecture Note Ser.}, pages 187--228. Cambridge Univ.
  Press, Cambridge, 1988.

\bibitem{Roe93}
J.~Roe.
\newblock Coarse cohomology and index theory on complete {R}iemannian
  manifolds.
\newblock {\em Mem. Amer. Math. Soc.}, 104(497):x+90, 1993.

\bibitem{Roe96}
J.~Roe.
\newblock {\em Index theory, coarse geometry, and topology of manifolds},
  volume~90 of {\em CBMS Regional Conference Series in Mathematics}.
\newblock Published for the Conference Board of the Mathematical Sciences,
  Washington, DC; by the American Mathematical Society, Providence, RI, 1996.

\bibitem{Rosenberg93}
J.~Rosenberg.
\newblock Analytic {N}ovikov for topologists.
\newblock In {\em Novikov conjectures, index theorems and rigidity, {V}ol.\ 1
  ({O}berwolfach, 1993)}, volume 226 of {\em London Math. Soc. Lecture Note
  Ser.}, pages 338--372. Cambridge Univ. Press, Cambridge, 1995.

\bibitem{Rosenberg16}
J.~Rosenberg.
\newblock Novikov's conjecture.
\newblock In {\em Open problems in mathematics}, pages 377--402. Springer,
  2016.

\bibitem{SchickZadeh}
T.~Schick and M.~E. Zadeh.
\newblock Large scale index of multi-partitioned manifolds.
\newblock arXiv:1308.0742, to appear in {\em J. Noncommutat. Geom.}

\bibitem{MR3029424}
C.~Wahl.
\newblock Higher {$\rho$}-invariants and the surgery structure set.
\newblock {\em J. Topol.}, 6(1):154--192, 2013.

\bibitem{Wall99}
C.~T.~C. Wall.
\newblock {\em Surgery on compact manifolds}, volume~69 of {\em Mathematical
  Surveys and Monographs}.
\newblock American Mathematical Society, Providence, RI, second edition, 1999.
\newblock Edited and with a foreword by A. A. Ranicki.

\bibitem{GY98}
G.~Yu.
\newblock The {N}ovikov conjecture for groups with finite asymptotic dimension.
\newblock {\em Ann. of Math. (2)}, 147(2):325--355, 1998.

\bibitem{Zadeh10}
M.~E. Zadeh.
\newblock Index theory and partitioning by enlargeable hypersurfaces.
\newblock {\em J. Noncommut. Geom.}, 4(3):459--473, 2010.

\end{thebibliography}

\end{document}